\title{Functorial compactification of linear spaces}
\author{Chris Kottke}
\address{New College of Florida}
\email{ckottke@ncf.edu}
\begin{document}
\begin{abstract}
We define compactifications of vector spaces which are functorial
with respect to certain linear maps. These ``many-body'' compactifications are 
manifolds with corners, and the linear maps lift to b-maps in the sense of Melrose.
We derive a simple criterion under which the lifted maps are in fact b-fibrations, and identify
how these restrict to boundary hypersurfaces.
This theory is an application of a general result on the iterated blow-up
of cleanly intersecting submanifolds which extends related results in the literature.
\end{abstract}
\maketitle

\newcommand\Lin{\mathsf{Lin}}
\newcommand\MwC{\mathsf{MwC}}

\section{Introduction} \label{S:intro}

One approach to the inherent challenge of solving an analytical problem on a
non-compact space is to look for the ``right compactification'' and seek a
solution there.
To facilitate the analysis, the compactification should reside in a good
category, say a smooth manifold with boundary or with corners, and the new
points in the compactification should have some geometrical or analytical
significance.
For example, the (possibly singular) limits of a solution at the boundary faces
of a compact manifold with corners typically encode important asymptotic
regimes of original problem on the non-compact interior.
While examples of this approach are far too numerous to list completely, we
highlight
\cite{mazzeo1987meromorphic,mazzeo1990adiabatic,melrose1993atiyah,melrose1994spectral,graham2003scattering,vasy2010wave}
as a small sample of particular historical importance.
In general, the question of which compactification is the ``right'' one is
highly problem-specific and not always immediately obvious, even when the original space
is something simple, like a vector space.

In this paper we present a class of compactifications of vector spaces with the
key property that they admit smooth extensions of a given set of linear maps,
or in other words are {\em functorial}.
The starting point is a category, $\Lin$, whose objects are finite dimensional
real vector spaces $X$ equipped with a {\em linear system}---finite sets,
$\cS_X$, of subspaces including $\set 0$ and $X$ which are closed under
intersection---and whose morphisms are the {\em admissible linear maps} $f : X
\to Y$, which are required to satisfy $f^\inv(V) \in \cS_X$ for every $V \in
\cS_Y$.
The {\em many body compactification}, $\wh X$, of a vector space $X \in \Lin$
is the manifold with corners obtained from the radial compactification, $\ol X$,
by the iterated blow-up of each boundary $\pa \ol V$ of a subspace
$V \in \cS_X$, in order from smallest to largest.
This compactification is known in the literature going back to the work of Vasy
on scattering theory in many body systems \cite{vasy2001propagation}; however,
that the association
$
	X \mapsto \wh X
$
is actually a {\em functor} from the category $\Lin$ to the category of
manifolds with corners and b-maps is a new observation leading
to novel applications which are discussed below.

In fact, more can be said: if an admissible map $f : X \to Y$ is a so-called
{\em admissible quotient}, then its functorially associated morphism $\wh f$ is
actually a {\em b-fibration}, the analogue for corners of a fibration in the
usual category of manifolds without corners.
In this case we obtain a detailed description of the restriction of $\wh f$ to
the boundary hypersurfaces of $\wh X$.
Referring to \S\ref{S:bkg} precise definitions, our results may be summarized
as follows.
\begin{thm-}[Theorem~\ref{T:mb_functor}, \ref{T:boundary_faces}, and \ref{T:mb_map_on_hs}]
\mbox{}
\begin{enumerate}
[{\normalfont (i)}]
\item\label{I:functor}
The association $X \mapsto \wh X$ is a functor from $\Lin$ to the category
of manifolds with corners, so every admissible
map $f : X \to Y$ extends to a unique b-map $\wh f : \wh X \to \wh Y$.
If $f$ is an admissible quotient, meaning $f(\cS_X) =
\cS_Y$, then $\wh f$ is a b-fibration.
\item \label{I:faces}
The boundary hypersurfaces of $\wh X$, which are indexed by $V \in \cS_X
\setminus \set 0$, are diffeomorphic to products $B_V \times \wh{X/V}$, where
$\wh{X/V}$ is the many body compactification of the quotient $X/V$, and
$B_V$ denotes the space obtained from the sphere $\pa \ol V$ by the blow-up of each subsphere $\pa
\ol W$ where $W \in \cS_X$ and $W \subset V$.
\item\label{I:f_on_hs}
In the case that $f : X \to Y$ is an admissible quotient, the restriction of
the b-fibration $\wh f$ to each boundary hypersurface $B_V \times \wh{X/V}$ of
$\wh X$ is identified with the product $\pa \wh f_V \times \wh{f/f_V}$, where
$\pa \wh f_V : B_V \to B_{f(V)}$ is induced by the extension of the map $f
\rst_V : V \to f(V)$ and $\wh{f/f_V} : \wh{X/V} \to \wh{Y/f(V)}$ is the
extension of the quotient map $f/f_V : X/V \to Y/f(V)$.
\end{enumerate}
\end{thm-}

This theory leads to a particularly simple solution to a problem posed in
\cite{melrose2008scattering} by Melrose and Singer, namely, to find
compactifications of the products $X^n$, $n \in \bbN$ which lift all of the
projections $X^n \to X^m$ for $m < n$ as well as the difference maps $X^n \to
X$, $(x_1,\ldots,x_n) \mapsto x_i - x_j$. (See \S\ref{S:appl_scatprod}.)

In addition, the many body compactifications of vector spaces
serve as key examples of a structure of interest in the manifolds with corners
community known as a {\em fibered corners} structure
\cite{albin2010resolution,albin2012signature,debord2015pseudodifferential,conlon2016quasi}
(see Remark~\ref{Rmk:qfb} for details).
Further equipping the vector spaces $X$ with Euclidean metrics, the
compactifications $\wh X$ become examples of {\em quasi fibered boundary} (QFB)
manifolds, a class of spaces which has been of recent
interest in Calabi-Yau and hyperK\"ahler geometry \cite{conlon2016quasi},
as it generalizes the QALE manifolds of Joyce \cite{joyce2000compact}.
While of course not topologically interesting, the $\wh X$ have the advantage
of a wealth of easily constructed b-maps (and in particular b-fibrations)
coming from the underlying linear maps, making them important test cases for
developing analytical results on QFB spaces.

In particular, they will play an important role in a forthcoming work of the
author along with K.\ Fritzsch and M.\ Singer on a QFB compactification of the
hyperK\"ahler moduli space of $\SU(2)$ monopoles, where in addition to serving
as simplified models for the compactified moduli space, the functoriality of
many body spaces is an essential part of the analytical machinery
used to obtain the compactification.

\medskip

The functoriality of the many body compactifications is a consequence
of our other main result (of independent interest) concerning
the iterated blow-up of submanifolds in a manifold with corners.
If $\cP$ is a finite set of
p-submanifolds of a manifold with corners $M$ (meaning submanifolds positioned nicely with respect to the
boundary faces of $M$; see \S\ref{S:bkg} for a precise definition),
we say $\cP$ is {\em closed under clean intersection} if each pair $P_i, P_j
\in \cP$ intersects cleanly and its intersection $P_i \cap P_j$ is also an
element of $\cP$. 
Among the many possible total orders on $\cP$,
a {\em size order} is any one which extends the partial order by
inclusion, so that $P_i \cap P_j$ must precede both $P_i$ and $P_j$.
More generally, an {\em intersection order} is any total order in which either
$P_i$ or $P_j$ is allowed to precede $P_i \cap P_j$, but not both.

\begin{thm-}[Theorem~\ref{T:comm_blowup_order}, Corollary~\ref{C:comm_blowup_int_order}]
\mbox{}
\begin{enumerate}
[{\quad\normalfont (i)}]
\item 
If $\cP$ is closed under clean intersection, then the iterated blow-up 
\[
	[M; \cP] := [M; P_1,\ldots, P_N]
\]
is well-defined up to diffeomorphism, where $P_1,\ldots, P_N$ is any
intersection order of the elements in $\cP$; in particular the iterated
blow-ups taken in two different intersection orders are canonically
diffeomorphic.
\item
If $\cQ \subset \cP$ is an intersection closed subset, then the blow-down $[M; \cP] \to M$
factors through a unique b-map $[M; \cP] \to [M; \cQ]$.
\end{enumerate}
\end{thm-}

Part (i) generalizes a result proved in \cite{vasy2001propagation} for size orders, under an additional assumption that 
each pair $\set{P_i,P_j}$ forms a so-called normal family, while
parts (i) and (ii) were proved in \cite{melrose2008scattering} in the special case that $\cP$ is a set of boundary faces.

\medskip An outline of the paper is as follows.
In \S\ref{S:general}, we define the category of linear systems and admissible
maps.
\S\ref{S:bkg} contains background information on manifolds with corners and
blow-up, and concludes with a proof of the second Theorem.
In \S\ref{S:functor} we prove the functoriality of the many body
compactification---part \eqref{I:functor} of the first Theorem
above---and in \S\ref{S:boundary} we characterize the boundary hypersurfaces of
the many body compactification, proving parts \eqref{I:faces} and
\eqref{I:f_on_hs}.
Finally, as an application, we show in \S\ref{S:appl_scatprod} how appropriate
many body compactifications of the products $X^n$ furnish an alternative to the
{\em scattering products} of \cite{melrose2008scattering}.

\subsection*{Acknowledgments} The author would like to thank Daniel Grieser for
his detailed comments on the manuscript, Michael Singer and Richard Melrose for helpful
discussions and comments on early versions of this paper, and Pat McDonald for his suggestions
regarding the exposition.

\section{Linear systems and many body compactification} \label{S:general}

\begin{defn}
A {\em linear system} in a vector space $X$ is a finite set $\cS_X$ of subspaces of $X$ such that
\begin{enumerate}
[{\normalfont (i)}]
\item $\set 0$ and $X$ are in $\cS_X$, and
\item whenever $W$ and $V$ are in $\cS_X$, then $W\cap V$ is in $\cS_X$.
\end{enumerate}
Given another vector space $Y$ with linear system $\cS_Y$, we say a linear map
$f : X \to Y$ is {\em admissible} if $f^{-1}(\cS_Y) \subset \cS_X$, that is, if
$f^{-1}(W) \in \cS_X$ for every $W \in \cS_Y$.
In particular, $\ker f$ is required to be in $\cS_X$.
The collection of finite dimensional vector spaces with linear systems and
admissible maps form a category $\Lin$.
\label{D:lin}
\end{defn}

If $K$ is a subspace of $X$ (not necessarily in $\cS_X$), then $\cS_X/K :=
\set{V/(V \cap K) : V \in \cS_X}$ is a linear system in the quotient $X/K$.
In general, the quotient map $\pi : X \to X/K$ need not be admissible.
In fact, admissibility of $\pi$ is equivalent to the condition that $V + K$ is
in $\cS_X$ for every $V$ in $\cS_X$ (in particular, for $V = \set 0$).
By an {\em admissible quotient map}, we will understand that the target
$X/K$ is equipped with the linear system $\cS_X/K$ (as opposed to some
subsystem, with respect to which the quotient would still be admissible).

\begin{lem}
The following are equivalent:
\begin{enumerate}
[{\normalfont (i)}]
\item $f : X \to Y$ is admissible and $f(\cS_X) := \set{f(V) : V \in \cS_X} = \cS_Y$.
\item $Y \cong X/\ker f$, and $f$ is an admissible quotient map.
\end{enumerate}
\label{L:ad_quot}
\end{lem}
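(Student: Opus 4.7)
My plan is to prove the two implications separately. Both rest on the observation that for any subspace $V \subset X$ one has $f^{-1}(f(V)) = V + \ker f$ and $f(V) \cong V/(V \cap \ker f)$, combined with the characterization noted just before the Lemma, namely that admissibility of the canonical quotient $X \to X/K$ is equivalent to $V + K \in \cS_X$ for every $V \in \cS_X$.

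For the implication (ii) $\Rightarrow$ (i), the hypothesis provides an identification $Y \cong X/\ker f$ under which $\cS_Y$ corresponds to $\cS_X/\ker f = \set{V/(V \cap \ker f) : V \in \cS_X}$, and under which $f$ becomes the canonical quotient map, hence is admissible by assumption. Under the same identification, $f(V)$ corresponds to $V/(V \cap \ker f)$, so $f(\cS_X) = \cS_X/\ker f$ agrees with $\cS_Y$.

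For (i) $\Rightarrow$ (ii), I first exploit $Y \in \cS_Y = f(\cS_X)$ to find $V_0 \in \cS_X$ with $f(V_0) = Y$, which gives surjectivity of $f$ and hence an isomorphism $\ol f : X/\ker f \to Y$ (note $\ker f = f^{-1}(\set 0) \in \cS_X$ by admissibility of $f$). To verify the linear systems match, each $V \in \cS_X$ has $f(V)$ corresponding under $\ol f$ to $V/(V \cap \ker f) \in \cS_X/\ker f$, and $f(\cS_X) = \cS_Y$ guarantees these exhaust $\cS_Y$. Admissibility of the canonical quotient then follows from $V + \ker f = f^{-1}(f(V)) \in \cS_X$, applying admissibility of $f$ to $f(V) \in \cS_Y$.

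No substantial obstacle is present: the entire argument is a careful unpacking of definitions together with elementary linear algebra. The only mildly subtle point to keep straight is that the admissibility of $f$ in (i) automatically yields admissibility of the associated canonical quotient, via the identification $f^{-1}(f(V)) = V + \ker f$ above.
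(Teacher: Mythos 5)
Your proposal is correct and follows essentially the same route as the paper: identify $Y$ with $X/\ker f$ using surjectivity (which follows from $X \in \cS_X$ and $f(\cS_X) = \cS_Y$), and observe that under this identification $f(V) \cong V/(V\cap \ker f)$, so the two conditions translate into one another. You simply spell out both implications and the admissibility of the quotient (via $f^{-1}(f(V)) = V + \ker f$) in more detail than the paper does.
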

\begin{proof}[Proof of Lemma~\ref{L:ad_quot}]
The condition $f(\cS_X) = \cS_Y$ implies in particular that $Y = f(X)$, so $f$
is surjective and we may identify $Y$ with $X/K$, where $K = \ker f$.
Under this identification, $f(V) \cong V/(V \cap K)$,
so the condition $f(\cS_X) = \cS_Y$ becomes the statement that $\cS_Y \cong
\cS_X / K$.
\end{proof}

\begin{defn}
The {\em many body compactification} of a linear system $(X,\cS_X)$ is the manifold with corners
\[
	\wh X = [\ol X; \pa \cS_X],
	\quad \pa \cS_X = \set{\pa \ol V : V \in \cS_X}
\]
obtained by iteratively blowing up the boundaries of the subspaces inside the
radial compactification $\ol X$ of $X$.
The blow-up is performed in some {\em size order}, meaning any total order on
$\cS_X$ extending the partial order defined by inclusion of subspaces.
We prove in Corollary~\ref{C:whX_welldefined} that $\wh X$ is well-defined,
following a digression through the theory of manifolds with corners,
referring the reader to \cite{melrose1993atiyah,melrosedifferential} for a
comprehensive account.
\label{D:mbcpctn}
\end{defn}

\section{On manifolds with corners} \label{S:bkg}
Recall that the {\em radial compactification} of a vector space $X$ is the
manifold with boundary $\ol X$ obtained by adjoining a sphere of dimension
$\dim(X) - 1$ at infinity, with the smooth structure induced by taking $1/r$ as
a boundary defining function for any choice of Euclidean norm $r$.
Equivalently, $\ol X$ can be obtained by the stereographic projection of $X
\times \set{1} \subset X \times [0,\infty)$ onto the half sphere $\sset{(x,t)
\in X \times [0,\infty) : \abs x^2 + t^2 = 1}$.

As a manifold with boundary, $\ol X$ is also a {\em manifold with corners},
meaning a second countable Hausdorff space $M$ with a covering by charts
locally diffeomorphic to open sets in $\bbR^n_+ = [0,\infty)^n$.
The {\em codimension} of a point in $M$ is the number of
$\bbR_+$ coordinates vanishing at that point in any chart, and a {\em 
face} of $M$ is the closure of a connected component of points with fixed
codimension;
we use the term {\em boundary face} if the codimension is positive.
In particular, {\em boundary hypersurfaces} are boundary faces of codimension
one, and we require as part of the definition of a manifold with corners that
these are embedded---that is, upon taking the closure of a set of
points of codimension one, no self-intersections occur.

A {\em p-submanifold} $P \subset M$ is a submanifold of some face $F$
of $M$, with $F$ taken as small as possible, which is required to
intersect any boundary hypersurface of $M$ transversally in $F$; in particular
$P$ is covered by ``product-type'' coordinate charts valued in $\bbR_+^{n-l}
\times \bbR^l$ for various $l$, in which it is locally defined by the vanishing
of $\codim(P)$ of the coordinates.
It follows from the definition that if $P$ and $Q$ are p-submanifolds of $M$
and $P \subseteq Q$, then $P$ is also a p-submanifold of $Q$ (there is some
confusion about this point in the literature).
The (radial) {\em blow-up} of $P$ in $M$, denoted by $[M; P]$, is the space $(M
\setminus P) \cup S_+P$ where $S_+ P$ is the inward pointing spherical normal
bundle (that is, the set of unit vectors in $NP$ with respect to some inner
product which integrate to flows of $M$) with smooth structure induced by polar
coordinates normal to $P$.
This space admits a {\em blow-down map} $\beta : [M; P] \to M$, which is the
smooth surjection defined by the identity on $M\setminus P$ and the bundle
projection on $S_+P$; this is an example of a {\em b-map}, as defined in
\S\ref{S:functor} below.
The boundary hypersurface $\beta^\inv(P) \subset [M; P]$ is referred to as the
{\em front face} of the blow-up.
The following local coordinate characterization is convenient: given a
coordinate chart in which $P$ has the form $\set{(x_1,\ldots, x_n) : x_1 =
\cdots = x_k = 0}$ where the $x_i$ are each valued in either $\bbR_+$ or
$\bbR$, the preimage in $[M; P]$ is covered by charts with ``projective
coordinates''
\begin{equation}
	(\wh x, x'/\wh x),
	\quad \wh x = \pm x_i \geq 0, \ i \in \set{1,\ldots,k},
	\quad x' = (x_1,\ldots,x_{i-1},x_{i+1},\ldots, x_n)
	\label{E:proj_coord_notn}
\end{equation}
where $\wh x$ is a $\bbR_+$ coordinate ranging over
$\pm x_1,\ldots, \pm x_k$ (we take only the $+$ sign if the original
coordinate was $\bbR_+$-valued, and both signs if it was
$\bbR$-valued), and $x'$ denotes the tuple $(x_1,\ldots, x_n)$ with $x_i$
removed; in such a chart $\wh x$ is a boundary defining coordinate for the
front face.

If $S \subset M$ is another p-submanifold, the {\em lift} (or {\em proper
transform}) of $S$ in $[M; P]$ is defined to be the set $\beta^\inv(S)$ if $S
\subset P$, and the closure of $\beta^\inv(S\setminus P)$ in $[M; P]$
otherwise.
Committing a minor abuse of notation, we will continue to use the same letter to
denote both a p-submanifold of $M$ and its lift to $[M; P]$.
Finally, we use the abbreviated notation $[M; P, S]$ to denote the {\em
iterated blow-up} $[[M; P]; S]$, given by first blowing up $P$ in $M$ and then
the lift of $S$ in $[M; P]$, provided that this lift is a p-submanifold in $[M;P]$.

In the setting of Definition~\ref{D:mbcpctn} above, we may identify the closure
of each $V \in \cS_X$ in $\ol X$ with its radial compactification $\ol V$;
these along with their boundaries $\pa \ol V$ are clearly p-submanifolds in $\ol X$.
That $\wh X$ is well-defined is a direct consequence of the following general
results about commuting blow-ups in manifolds with corners.

Recall that
a set
$\cP = \set{P_1,\ldots, P_\ell}$
of p-submanifolds
of $M$ is
a {\em normal family} if it admits {\em simultaneous} product-type
coordinates; in other words, $\bigcup_i P_i$ is covered by coordinate charts
$\set{(x_1,\ldots,x_n)}$ in which every $P_j$ has the form
$\set{(x_1,\ldots,x_n) : x_i = 0, i \in I_j}$ for some $I_j \subset
\set{1,\ldots,n}$.

\begin{prop}[\cite{melrosedifferential}, Prop.~5.8.1 and Prop.~5.8.2]
Let $M$ be a manifold with corners, with p-submanifolds $P, Q \subset M$. If one of the three following conditions hold:
\begin{enumerate}
[{\normalfont (i)}]
\item \label{I:comm_blowup_contained}
one of the submanifolds is included in the other, say $P \subset Q$, or
\item \label{I:comm_blowup_disjoint}
$P$ and $Q$ are disjoint in $M$, or
\item \label{I:comm_blowup_transverse}
$P$ and $Q$ are {\em normally transverse}, meaning $P$ and $Q$ constitute a normal family and intersect transversally in $M$,
\end{enumerate}
then there is a natural diffeomorphism $[M; P,Q] \cong [M; Q,P]$.
\label{P:comm_blowup}
\end{prop}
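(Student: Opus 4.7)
The plan is to verify each case locally in appropriately adapted coordinates and observe that the resulting identifications are canonical---they intertwine the two blow-down maps to $M$---so that they patch to a global diffeomorphism. Blow-up is a local construction, so locality is not an issue. Case \eqref{I:comm_blowup_disjoint} is immediate: when $P$ and $Q$ are disjoint we may work in disjoint open neighborhoods, and each blow-up is the identity outside its own neighborhood, so the order does not matter.

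For case \eqref{I:comm_blowup_transverse}, the normal family assumption produces simultaneous product-type coordinates $(x_1,\ldots,x_n)$ in which $P = \{x_i = 0 : i \in I_P\}$ and $Q = \{x_i = 0 : i \in I_Q\}$, and normal transversality forces $I_P \cap I_Q = \emptyset$. Blowing up $P$ then introduces projective coordinates only in the variables indexed by $I_P$, leaving the defining equations of $Q$ intact; the lift of $Q$ in each resulting chart is the same coordinate subset, and blowing it up introduces projective coordinates in the variables indexed by $I_Q$. The two coordinate changes act on disjoint blocks of variables, so they commute tautologically.

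The main work is case \eqref{I:comm_blowup_contained}, $P \subset Q$. Choose product coordinates with $Q = \{x_1 = \cdots = x_j = 0\}$ and $P = \{x_1 = \cdots = x_k = 0\}$, $j < k$. For $[M;P,Q]$, the first blow-up of $P$ gives projective charts with boundary defining function $\hat x = \pm x_i$, $i \in \{1,\ldots,k\}$: in a chart with $i \leq j$ the lift of $Q$ is empty (since $Q \setminus P$ has no preimage off the front face), while in a chart with $i > j$ it is cut out by $x_s/\hat x = 0$ for $s = 1, \ldots, j$, and blowing it up introduces a second layer of polar structure in these ratio variables. For $[M;Q,P]$, the first blow-up of $Q$ instead gives projective charts in the variables $(x_1,\ldots,x_j)$ with boundary defining function $\hat x_Q$, and the lift of $P$---being contained in $Q$---equals the total preimage $\{\hat x_Q = 0,\, x_{j+1} = \cdots = x_k = 0\}$, a p-submanifold of $[M;Q]$; its blow-up introduces projective coordinates in $(\hat x_Q, x_{j+1},\ldots,x_k)$. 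An explicit change of projective coordinates identifies the two iterated polar structures, essentially by expressing an iterated polar radius as a product in two different ways.

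The hard part is the nested case, where the two iterated polar structures involve genuinely different coordinate choices and must be matched by a nontrivial (if elementary) change of variables that is smooth both ways. In every case, checking that the local identification commutes with the blow-down map to $M$ both fixes the diffeomorphism as the natural one and guarantees consistency across charts.
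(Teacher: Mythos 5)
Your proposal is correct and follows essentially the same route as the paper: reduce to local product-type coordinates (automatic for nested or normal pairs), dispose of the disjoint and normally transverse cases by observing the blow-ups act on disjoint coordinate blocks, and handle the nested case $P \subset Q$ by writing out the projective charts in both orders and matching them, noting that the lift of $Q$ misses the charts of $[M;P]$ whose defining function comes from a coordinate cutting out $Q$, while the lift of $P$ in $[M;Q]$ is the total preimage $\{\hat x_Q = 0,\ x_{j+1} = \cdots = x_k = 0\}$. The only difference is one of explicitness: the paper writes down the final chart identification $(x,\hat y, y'/\hat y, \hat z/\hat y, (z'/\hat y)/(\hat z/\hat y) \equiv z'/\hat z)$ where you summarize it as an iterated polar radius factoring in two ways, which is the same computation.
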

Though this result is well-known, we include a proof below since the arguments
we employ form the basis for other results later on.
%



\begin{proof}[Proof of Proposition~\ref{P:comm_blowup}]
Observe that in all three cases $\set{P,Q}$ is a normal family.
Indeed, by induction any nested sequence $P_1 \subseteq P_2 \subseteq \cdots \subseteq P_\ell$
of p-submanifolds forms a normal family, since we
may assume that we have simultaneous product coordinates for $\set{P_1,\ldots,
P_k}$ and use the fact that $P_k$ is a p-submanifold of $P_{k+1}$ to extend
these to simultaneous local product coordinates for $\set{P_1,\ldots,P_{k+1}}$.

If $P$ and $Q$ are disjoint, then $[M; P,Q] \cong [M; Q,P]$ obviously holds.
Suppose next that $P \subset Q$.
%
This means that
locally along $P$ there exist coordinates $(x,y,z)$, where $P = \set{y = 0,
z = 0}$ and $Q = \set{z = 0}$.
Here $x$, $y$ and $z$ are tuples (e.g. $z = (z_1,\ldots,z_k)$) each component
of which is valued either
in $\bbR$ or $\bbR_+$.
The preimage of such a coordinate chart in $[M; Q]$ is covered by charts having
coordinates of the form $(x,y, \wh z, z'/\wh z)$, where in notation as above $\wh z \in \bbR_+$ runs
through $\pm z_i$, $i = 1,\ldots, k$, and $z'$ denotes the tuple $z$ with
$z_i$ removed.
%
%
The lift of $P$ in any such chart is the set $\set{y = 0, \wh z = 0}$.
On the blow-up $[M; Q, P]$, we then have coordinates $(x,y/\wh z, \wh z, z'/\wh z)$
and $(x,\wh y, y'/\wh y, \wh z/\wh y, z'/\wh z)$, using similar notation.

In the other direction, $[M; P]$ has coordinates of the form $(x,\wh y, y'/ \wh
y, z/\wh y)$, in which $Q$ lifts as $\set{z/\wh y = 0}$, and of the form
$(x,y/\wh z, \wh z, z'/\wh z)$, which $Q$ does not meet.
Further passing to $[M; P; Q]$, we have a cover by coordinates of the form
$(x,\wh y, y'/\wh y, \wh z/\wh y, (z'/\wh y)/(\wh z/ \wh y) \equiv z'/\wh z)$.
Clearly we can identify these coordinate charts in $[M; P, Q]$ with the
respective ones from $[M; Q, P]$, which patch together into a diffeomorphism.

Finally suppose $P$ and $Q$ are normally transverse; this amounts to the local existence
of coordinates $(x,y,z)$ as above in which now $P$ is the set $\set{z = 0}$ and $Q$ is the
set $\set{y = 0}$.
The blow-up $[M; P]$ admits coordinate charts of the form $(x,y, \wh z, z'/\wh z)$ with $Q$
lifting as $\set{y = 0}$, so that $[M; P, Q]$ has charts of the form $(x,\wh y, y'/\wh y, \wh z, z'/\wh z)$.
This is clearly symmetric upon interchanging the roles of $P$ and $Q$, so we can again
patch together a diffeomorphism $[M; P, Q] \cong [M; Q, P]$.
\end{proof}

\begin{thm}
Let $M$ be a manifold with corners, and $\cP$ a collection of p-submanifolds
which is closed under clean intersection (meaning the submanifolds intersect
cleanly pairwise, and each such non-empty intersection is an element of $\cP$).
%
%
\begin{enumerate}
[{\normalfont (i)}]
\item \label{I:comm_blowup_order_defined}
The iterative blow-up $[M; \cP] = [\cdots[[M; P_1]; P_2]; \cdots; P_N]$ is
well-defined where $P_1 < P_2 < \cdots <P_N$ is any choice of {\em size order},
meaning a total order on $\cP$ extending the partial order by inclusion.
\item \label{I:comm_blowup_order_factor}
If $\cQ \subset \cP$ is an intersection closed subset of $\cP$, then the blow-down $[M; \cP] \to M$ factors
through a unique b-map $[M; \cP] \to [M; \cQ]$.
\end{enumerate}
\label{T:comm_blowup_order}
\end{thm}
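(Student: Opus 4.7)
The approach is to reduce both parts of the theorem to adjacent swaps justified by one of the three cases of Proposition~\ref{P:comm_blowup}.

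For part (i) I would induct on $|\cP|$. Given two size orders $\sigma$ and $\tau$, set $P := \sigma(1)$; by size order this $P$ is necessarily a minimal element of $\cP$ under inclusion. The key observation is that any $R \in \cP$ not containing $P$ must be disjoint from $P$ in $M$: since $P \cap R \in \cP$ lies in $P$, minimality forces $P \cap R = \emptyset$ or $P \cap R = P$, and the latter would entail $P \subseteq R$. In $\tau$, every element preceding $P$ fails to contain $P$ (otherwise size order would place it after $P$), hence is disjoint from $P$ in $M$; since blow-ups of family members preserve disjointness in the intermediate spaces, each adjacent transposition moving $P$ leftward in $\tau$ is justified by Proposition~\ref{P:comm_blowup}\eqref{I:comm_blowup_disjoint}. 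Once $\sigma$ and $\tau$ share their first entry, I apply the inductive hypothesis to $\cP \setminus \{P\}$ as a clean-intersection family inside $[M; P]$, verifying by a local coordinate check that the lifted family remains closed under clean intersection.

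For part (ii) I would start from a size order $P_1 < \cdots < P_N$ on $\cP$ (well-defined by (i)) and bubble-sort $\cR := \cP \setminus \cQ$ toward the end: whenever an $R \in \cR$ is immediately followed by a $Q \in \cQ$, swap them, iterating until no such pair remains. Because each swap only exchanges an $\cR$ element with a $\cQ$ element, the relative orderings within $\cQ$ and within $\cR$ are preserved throughout and remain size orders on their respective subsets. The swap of adjacent $R \in \cR$ and $Q \in \cQ$ is justified by analyzing their relation in $M$: if $R \subsetneq Q$, Proposition~\ref{P:comm_blowup}\eqref{I:comm_blowup_contained} applies since blow-ups preserve inclusion of lifts; if $R \cap Q = \emptyset$ in $M$, Proposition~\ref{P:comm_blowup}\eqref{I:comm_blowup_disjoint} applies since disjointness persists; if $R$ and $Q$ are incomparable with $R \cap Q \ne \emptyset$, then $R \cap Q \in \cP$ is strictly smaller than both and thus precedes both in the current order (using the preserved size orders on $\cQ$ and $\cR$), so it has been blown up in the intermediate space, making the lifts of $R$ and $Q$ normally transverse and thereby justifying Proposition~\ref{P:comm_blowup}\eqref{I:comm_blowup_transverse}; the case $Q \subsetneq R$ cannot occur since $R$ precedes $Q$. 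After all swaps, all of $\cQ$ precedes all of $\cR$, presenting $[M; \cP]$ as $[[M; \cQ]; \text{lifts of } \cR]$; the factoring b-map $[M; \cP] \to [M; \cQ]$ is then the iterated blow-down of the $\cR$ factor, and it is unique since determined on the open dense interior where both sides restrict to $M \setminus \bigcup \cP$.

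The main technical obstacle is the normal-transversality claim in the third case of the swap: in the intermediate space, where $R \cap Q$ and many other family members have been blown up in some order, the lifts of $R$ and $Q$ must be normally transverse. In coordinates afforded by the clean intersection of $R$ and $Q$, with $R = \{y = 0\}$, $Q = \{z = 0\}$, and $R \cap Q = \{y = z = 0\}$ near a point of the intersection, a direct computation with projective coordinates establishes normal transversality immediately after blowing up $R \cap Q$. Preservation through subsequent blow-ups of other family members reduces to a lemma—to be established analogously to the proof of Proposition~\ref{P:comm_blowup}—that blow-up of any member of a clean-intersection family preserves the clean-intersection structure, and hence the normal transversality, of the lifts of the remaining members.
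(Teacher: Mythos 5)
Your part~(i) is a correct alternative organization: instead of commuting adjacent incomparable pairs anywhere in the order (as the paper does, using that $P\cap Q$ precedes both, so the lifts of $P$ and $Q$ are already disjoint by the time either is blown up), you peel off a minimal element and induct. The debt you incur is the deferred ``local coordinate check'' that the lifts of $\cP\setminus\set{P}$ to $[M;P]$ again form a family of p-submanifolds closed under clean intersection; this is a stronger statement than anything the paper needs (it only ever verifies one of the three pairwise relations of Proposition~\ref{P:comm_blowup} for lifts), though it is true and provable by the same simultaneous product coordinates. One correction: when $R$ and $Q$ are incomparable with nonempty intersection, blowing up $R\cap Q$ makes their lifts \emph{disjoint}, not normally transverse --- on the front face the lift of $R$ sits over normal directions tangent to $R$, the lift of $Q$ over those tangent to $Q$, and by cleanness no normal direction to $R\cap Q$ is tangent to both. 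Your swap in that case is justified by Proposition~\ref{P:comm_blowup}\eqref{I:comm_blowup_disjoint} rather than \eqref{I:comm_blowup_transverse}, and the persistence problem you raise at the end evaporates there, since disjointness survives all further blow-ups.

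The genuine gap is in part~(ii), in the case $R\subset Q$ with $R\in\cP\setminus\cQ$ immediately preceding $Q\in\cQ$. You invoke Proposition~\ref{P:comm_blowup}\eqref{I:comm_blowup_contained} on the grounds that ``blow-ups preserve inclusion of lifts.'' This is false in exactly the situation your bubble sort creates: once some element properly containing $R$ has been blown up earlier in the current order --- which happens whenever a member of $\cQ$ containing $R$ has already been swapped past $R$ --- the lift of $R$ is no longer contained in the lift of $Q$. Concretely, take $\cP=\set{R,S,A}$ with proper inclusions $R\subset S\subset A$ and $\cQ=\set{S,A}$: starting from the size order $R<S<A$, your first swap yields $S<R<A$, and the second swap must commute $R$ past $A$ inside $[M;S]$, where (with $R=\set{x=y=z=0}$, $S=\set{y=z=0}$, $A=\set{z=0}$, in the chart $(x,\wh y,y'/\wh y,z/\wh y)$) the lift of $R$ is $\set{x=0,\wh y=0}$ and the lift of $A$ is $\set{z/\wh y=0}$: these are normally transverse, not nested. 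Handling this requires Proposition~\ref{P:comm_blowup}\eqref{I:comm_blowup_transverse} together with an argument that the normal transversality created by blowing up a minimal such $S$ persists through the blow-up of whatever elements sit between $S$ and the swap. That is precisely the subcase $P\subset R\subset Q$ which occupies most of the paper's proof of part~(ii), and it is absent from your case analysis; your closing paragraph raises the right persistence worry but attaches it to the incomparable case, where it is not needed.
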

\begin{proof}
In part \eqref{I:comm_blowup_order_defined}, given a choice of total order on
$\cP$, we must first justify why the iterated blow-up is defined; more
precisely we must show that upon blowing up some $Q$, those $P$ such that $Q <
P$ lift again to p-submanifolds.
If $Q$ and $P$ are disjoint in $M$ to begin with then this is obvious.
If $Q$ and $P$ meet but $Q \not \subset P$, then by the clean intersection
property $Q$ and $P$ lift to disjoint p-submanifolds in the blow-up of $Q \cap
P$, which must precede both in the given order.
Finally, if $Q \subset P$ in $M$, then this inclusion relation persists to their lifts
under blow-up of those elements preceding $Q$ in the size order, and then the
fact that $P$ lifts to a p-submanifold upon blowing up $Q$ was observed in the
local coordinate computation in the proof of Proposition~\ref{P:comm_blowup}
above.

To see that $[M; \cP]$ is well-defined independent of the choice of
size order, note that any size order may be obtained from any other one by a
sequence of size orders in which pairs of adjacent elements (necessarily
incomparable in the original partial order) are swapped.
Thus we consider a pair $P$ and $Q$ of submanifolds such that $P \not \subset
Q$ and $Q \not \subset P$.
%
%
In the blow-up of $P\cap Q$ (if non-empty) in $M$ the lifts of $P$ and $Q$ are
made disjoint (or else they are already disjoint in $M$) so it follows from
Proposition~\ref{P:comm_blowup} that
\begin{equation}
	[M; \ldots, P\cap Q, \ldots, P, Q,\ldots] \cong [M; \ldots, P\cap Q, \ldots, Q,P,\ldots].
	\label{E:comm_blowup}
\end{equation}

For part \eqref{I:comm_blowup_order_factor}, it suffices to suppose that $\cP$ is larger than $\cQ$ by
precisely one element, say $\cP = \cQ \cup \set {P}$, and to suppose that the
chosen size order on $\cP$ restricts to the size order on $\cQ$.
Then we must show that $[M; \cQ, P] := [[M; \cQ]; P]$ (which obviously factors
through $[M; \cQ]$) is well-defined and is diffeomorphic to $[M; \cP]$; this involves
showing that $P$ lifts to a p-submanifold in $[M; \cQ]$ and that we may commute
$P$ past all those elements $Q$ of $\cQ$ which follow it in the size order on $\cP$.

%
If $P$ and $Q$ are disjoint, then $P$ lifts trivially under the blow-up of $Q$,
and we may commute $Q$ and $P$ in any blow-up order.
If $P$ meets $Q$ but is not contained in $Q$, then the lifts of $P$ and $Q$ are
disjoint p-submanifolds in the blow-up of $P\cap Q$, which must be in $\cQ$ by
intersection closure, and in this case \eqref{E:comm_blowup} holds.
Finally, if $P \subset Q$ in $M$ then one of two possibilities occurs.
If none of the elements preceding $Q$ contains $P$, then $P$ lifts to a
p-submanifold in the blow-up of these preceding elements by the arguments just
given; moreover this lift of $P$ is contained in the lift of $Q$, so $[M;
\ldots, Q,P, \ldots] \cong [M; \ldots, P,Q, \ldots]$ by part
\eqref{I:comm_blowup_contained} of Proposition~\ref{P:comm_blowup}.

On the other hand, if there is some $R$ preceding $Q$ such that $P \subset R$,
then the lifts of $P$ and $Q$ are not comparable upon blowing up $R$.
Taking a minimal such $R$, we may assume that $P \subset R \subset Q$ and that
no element preceding $R$ contains $P$ (so that the relation $P \subset R \subset Q$ continues
to hold after blowing up the elements preceding $R$).
Then we claim that $P$ and $Q$ lift to normally transverse p-submanifolds in the blow-up
of $R$.
Indeed, over local coordinates $(w,x,y,z)$ in which $P = \set{x = 0, y = 0, z =
0}$, $R = \set{y = 0, z = 0}$ and $Q = \set{z = 0}$, the blow-up of $R$ is
covered by coordinates of the form $(w,x,\wh y, y'/\wh y, z/\wh y)$ and
$(w,x,y/\wh z, \wh z, z'/\wh z)$.
In the latter coordinates $P$ lifts to $\set{x = 0, \wh z = 0}$, while the lift
of $Q$ is empty, and in the former coordinates $P$ lifts to $\set{x = 0, \wh y
= 0}$ while $Q$ lifts to $\set{z/\wh y = 0}$, which are normally transverse.
Thus in this final case $P$ lifts to a p-submanifold in the blow-up of $Q$, and
$[M; \ldots, R, \ldots, Q,P, \ldots] \cong [M; \ldots, R, \ldots, P,Q,
\ldots]$, completing the proof.
\end{proof}

\begin{rmk}
In \cite{vasy2001propagation}, (c.f.\ Lemmas~2.7 and 2.8), Vasy proves a result
similar to part (i) under the additional hypothesis that the p-submanifolds are
pairwise normal, i.e., each pair $\set{P_i,P_j}$ is a normal family.
In the proof above, we only use normality for nested sequences (which is automatic),
so this additional hypothesis can be removed.
\end{rmk}

\begin{cor}
The many body compactification $\wh X$ of a linear system $(X, \cS_X)$ is well-defined.
\label{C:whX_welldefined}
\end{cor}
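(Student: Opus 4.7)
The plan is to verify the two hypotheses of Theorem~\ref{T:comm_blowup_order}(i) for the collection $\pa \cS_X = \set{\pa \ol V : V \in \cS_X \setminus \set 0}$ inside $\ol X$: namely, that each $\pa \ol V$ is a p-submanifold of $\ol X$, and that $\pa \cS_X$ is closed under clean intersection. Once these are established, part (i) of that theorem immediately gives that $[\ol X; \pa \cS_X]$ is well-defined up to canonical diffeomorphism, independent of the choice of size order, which is exactly the assertion.

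First I would observe that, identifying $\ol V$ with the closure of $V$ inside $\ol X$ via a choice of Euclidean norm on $X$ restricting to $V$, the subspace $\ol V$ is a closed submanifold-with-boundary of $\ol X$ meeting $\pa \ol X$ transversally in the sphere $\pa \ol V$. Hence $\ol V$ is a p-submanifold of $\ol X$, and its boundary $\pa \ol V$---which lies entirely in the single boundary hypersurface $\pa \ol X$---is a p-submanifold of $\ol X$ of codimension $1 + \codim_X V$.

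Next I would check intersection closure. In the sphere $\pa \ol X$, the submanifold $\pa \ol V$ consists exactly of the unit directions lying in $V$; thus for $V,W \in \cS_X$,
\[
	\pa \ol V \cap \pa \ol W = \set{v \in \pa \ol X : v \in V \cap W} = \pa \ol{V \cap W},
\]
which is an element of $\pa \cS_X$ by the intersection-closure axiom for $\cS_X$. Cleanliness of this intersection is then a linear-algebraic check inside the smooth manifold $\pa \ol X$: at any $v \in \pa \ol{V \cap W}$, the tangent spaces $T_v \pa \ol V$ and $T_v \pa \ol W$ are the subspaces of $T_v \pa \ol X$ consisting of vectors in $V$, respectively $W$, perpendicular to $v$, and their intersection is the set of vectors in $V \cap W$ perpendicular to $v$, which coincides with $T_v \pa \ol{V \cap W}$.

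With both conditions verified, Theorem~\ref{T:comm_blowup_order}(i) applies and yields the well-definedness of $\wh X = [\ol X; \pa \cS_X]$. There is no serious obstacle here; the only mild subtlety is to be careful in the cleanness verification, which as indicated reduces to elementary linear algebra in the ambient sphere at infinity once one notes that every $\pa \ol V$ lies in the single boundary hypersurface $\pa \ol X$.
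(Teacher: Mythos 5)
Your proposal is correct and follows exactly the route the paper intends: the paper asserts without detail that the $\pa\ol V$ are p-submanifolds of $\ol X$ and then invokes Theorem~\ref{T:comm_blowup_order}(i), and you have simply filled in the (elementary) verifications that each $\pa\ol V$ is a p-submanifold, that $\pa\ol V\cap\pa\ol W=\pa\ol{V\cap W}$ lies in the family by the intersection axiom for $\cS_X$, and that these intersections are clean by linear algebra in the sphere at infinity. Nothing is missing; the minor discrepancy of omitting $V=\set 0$ from the blown-up family is harmless since $\pa\ol{\set 0}$ is empty.
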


In fact it is possible to strengthen Theorem~\ref{T:comm_blowup_order} which we
do here for the sake of completeness, though we shall not need the stronger
version below.
Given a finite collection $\cP$ of p-submanifolds closed under clean
intersection, define an {\em intersection order} on $\cP$ to be any total order
in which for any $P,Q \in \cP$, the intersection $P \cap Q$ is not preceded by
both $P$ and $Q$.
In other words, for each distinct triple $P$, $Q$ and $P \cap Q$ in $\cP$, one
of the following four possibilities must hold:
(i) $P \cap Q < P < Q$, (ii) $P \cap Q < Q < P$, (iii) $Q < P \cap Q < P$, or (iv) $P < P \cap Q < Q$.

It is not obvious a priori that the blow-up in $M$ of $\cP$ in some
intersection order is even well-defined, since upon blowing up some $Q$, in
general there will be $P > Q$ whose lifts are not initially p-submanifolds.
The simplest example of this situation occurs in the half space $M = \bbR^2\times \bbR_+$,
with $Q = \set{x = y = 0}$ the $z$-axis and $P = \set{x = z,\ y = 0}$ a ``diagonal'' line
meeting $Q$ cleanly at the boundary point $P \cap Q = (0,0,0)$.
Nevertheless, it so happens that the lift of such $P$ eventually become
p-submanifolds after further blow-ups.

\begin{cor}
Theorem~\ref{T:comm_blowup_order} holds with ``size order'' replaced by ``intersection order''.
\label{C:comm_blowup_int_order}
\end{cor}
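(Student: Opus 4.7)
Both parts of the corollary will follow from Theorem~\ref{T:comm_blowup_order} once part (i) is proven: part (ii) then reduces to Theorem~\ref{T:comm_blowup_order}\eqref{I:comm_blowup_order_factor} applied to the size-order blow-up, which by (i) coincides with the intersection-order blow-up.

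My plan for part (i) is induction on $N = |\cP|$. The inductive step rests on the observation that the initial segment $\cP' = \{P_1, \ldots, P_{N-1}\}$ of any intersection order on $\cP$ is itself closed under clean intersection: for any $i,j < N$, the intersection-order condition applied to $\{P_i, P_j\}$ forces $P_i \cap P_j$ to lie at position at most $\max(i,j) < N$, hence in $\cP'$. The induced order on $\cP'$ is again an intersection order, so by the inductive hypothesis $M_{N-1} = [M; P_1, \ldots, P_{N-1}]$ is well-defined and canonically diffeomorphic to $[M; \cP']$ in any size order.

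Granting the sub-claim that $P_N$'s lift in $M_{N-1}$ is a p-submanifold, the equality $M_N \cong [M; \cP]$ in size order follows from Theorem~\ref{T:comm_blowup_order}\eqref{I:comm_blowup_order_factor} applied to $\cP' \subset \cP$: its proof actually establishes the stronger identification $[M; \cP', P_N] \cong [M; \cP]$, and combining this with the inductive identification of $M_{N-1}$ gives $M_N = [M_{N-1}; P_N] \cong [M; \cP', P_N] \cong [M; \cP]_{\text{size}}$.

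The main obstacle is therefore verifying the sub-claim. For each $P_i$ with $i < N$, the interaction with $P_N$ is benign (disjoint or nested in either direction) unless $P_i$ and $P_N$ overlap non-nestedly with intersection $S = P_i \cap P_N \in \cP$ distinct from both; here the intersection-order condition applied to $\{P_i, P_N\}$ forces $S < P_N$, so $S \in \cP'$ has been blown up by stage $N-1$. In the sub-case $S < P_i$, the blow-up of $S$ separates the proper transforms of $P_i$ and $P_N$ (by the clean-intersection computation at the front face of $S$, where the relevant tangent cones intersect only in $TS$), so $P_i$'s subsequent blow-up leaves $P_N$ unaffected. In the sub-case $P_i < S < P_N$---illustrated precisely by the text's example $M = \bbR^2 \times \bbR_+$ with $Q = \{x = y = 0\}$ and $P = \{x = z, y = 0\}$---the blow-up of $P_i$ temporarily spoils the p-submanifold property of $P_N$'s lift, but the intervening blow-up of $S$ restores it, which I would verify by a local projective-coordinate computation analogous to the normally-transverse case in the proof of Proposition~\ref{P:comm_blowup}.
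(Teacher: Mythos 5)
Your overall strategy is the paper's: induct on $|\cP|$, note that the initial segment $\cP'$ is intersection closed and carries an induced intersection order (a point you verify more explicitly than the paper does), invoke the inductive hypothesis to replace $M_{N-1}$ by a size-order blow-up of $\cP'$, and then appeal to the proof of Theorem~\ref{T:comm_blowup_order}\eqref{I:comm_blowup_order_factor} for the last step; your reduction of part (ii) to part (i) is also as in the paper. Through your third paragraph the argument is complete and correct.

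The problem is your final paragraph. First, it is unnecessary: the ``sub-claim'' that the lift of $P_N$ to $M_{N-1}$ is a p-submanifold is not a separate obstacle, because the canonical diffeomorphism $M_{N-1}\cong[M;\cP']$ (size order) supplied by the inductive hypothesis commutes with the blow-downs to $M$ and hence carries the lift of $P_N$ to the lift of $P_N$, and the proof of Theorem~\ref{T:comm_blowup_order}\eqref{I:comm_blowup_order_factor} explicitly establishes that the latter lift is a p-submanifold (``this involves showing that $P$ lifts to a p-submanifold in $[M;\cQ]$\ldots''). Second, taken on its own terms the paragraph has a genuine gap: in the sub-case $P_i<S<P_N$ you assert that blowing up $S$ ``restores'' the p-submanifold property of the lift of $P_N$, to be checked by a computation ``analogous to the normally-transverse case'' of Proposition~\ref{P:comm_blowup}; but in exactly this configuration (the paper's example $Q=\{x=y=0\}$, $P=\{x=z,\ y=0\}$) the pair is \emph{not} a normal family, so none of the coordinate models in Proposition~\ref{P:comm_blowup} applies, and the deferred computation is precisely the nontrivial content that the size-order rearrangement is designed to avoid proving directly. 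As written, the hardest case of your sub-claim rests on an unperformed and non-routine verification; the fix is simply to delete the paragraph and note that the sub-claim is already contained in the results you cite.
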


\begin{proof}
It suffices to show that the iterated blow-up of $\cP$ in an intersection order
is well-defined and diffeomorphic to the blow-up of $\cP$ in some size order.
We do this by induction on the size of $\cP$, the case $\abs{\cP} = 1$ being
trivial.
(Also, the case $\abs{\cP} = 2$ is covered by Proposition~\ref{P:comm_blowup}.)
Thus, given an intersection ordered set $\cP = \set{P_1 < \cdots < P_n}$, we
assume by induction that $[M; P_1,\ldots,P_{n-1}]$ is well-defined, and without
any loss of generality we may assume that $\cQ = \set{P_1 < \cdots < P_{n-1}}$
is in a size order.
Then that $[M; P_1,\ldots,P_{n-1},P_n] = [M; \cQ, P_n]$ is well-defined and
diffeomorphic to a blow-up of $\cP$ in $M$ in a size order was shown in the proof
of part (ii) of Theorem~\ref{T:comm_blowup_order} above.
\end{proof}
\begin{rmk}
This generalizes Proposition~3.5 and Corollary~3.8 in
\cite{melrose2008scattering}, where the authors prove
Corollary~\ref{C:comm_blowup_int_order} in the case that $\cP$ and $\cQ$ are
collections of boundary faces of $M$.
In fact it is not quite a full generalization of their results, since Melrose
and Singer relax the condition that the sets be closed under intersection,
requiring only that they be closed under {\em non-transversal intersection},
meaning that $P \cap Q$ is only required to be in $\cP$ if $P$ and $Q$ are not
(normally) transverse.
(Note that transverse boundary faces are automatically normally transverse.)
The proofs of Theorems~\ref{T:comm_blowup_order} and
Corollary~\ref{C:comm_blowup_int_order} would go through under this weaker
hypothesis thanks to
Proposition~\ref{P:comm_blowup}.\eqref{I:comm_blowup_transverse}, {\em
provided} the lifts of a normally transverse pair $P$ and $Q$ remain normally
transverse upon blowing up the elements preceding them in any intersection
order.
While this is automatic for boundary faces, it is not clear (to the author at least) that
it holds for pairwise cleanly intersecting p-submanifolds without further
hypotheses.

The intersection order condition in
Corollary~\ref{C:comm_blowup_int_order} is sharp in the sense that $[M; P,Q,
P\cap Q]$, even if well-defined, is generally not diffeomorphic to $[M; P \cap
Q, P, Q]$.
This is evident in simple examples, such as that of two distinct lines meeting
at a point in $\bbR^3$.
\end{rmk}

\section{Many body compactification as a functor} \label{S:functor}

Returning to our original setting, we now show that the many body
compactification is a functor from $\Lin$ to the category $\MwC$ of manifolds
with corners.
While there are various choices of morphisms between manifolds with corners (in
addition to the conventions of \cite{melrose1993atiyah} observed here, compare
for instance \cite{joyce2012manifolds} or \cite{joyce2016manifolds}, Definition
2.1), we take the morphisms in $\MwC$ to be the {\em b-maps}\footnote{These are
called ``smooth maps'' in \cite{joyce2016manifolds}. Joyce uses the term
``weakly smooth'' for what we call smooth here.} $g : M \to N$, which are by
definition those smooth maps (i.e., $g^\ast\big(C^\infty(N)\big) \subset
C^\infty(M)$) such that for each boundary defining function $\rho_{H}$ of a
boundary hypersurface $H \subset N$, the pullback $g^\ast(\rho_H)$ either
vanishes identically (implying that $g(M) \subset H$) or has the form
\begin{equation}
	g^\ast(\rho_H) = a\prod_{H'} \rho_{H'}^{e_{HH'}},
	\quad e_{HH'} \in \bbN_0,
	\quad a > 0 \in C^\infty(M).
	\label{E:b_map}
\end{equation}
Here the index $H'$ ranges over boundary hypersurfaces of $M$, and $\rho_{H'}$
is a boundary defining function for $H'$.
In this note all b-maps are {\em interior}, meaning that \eqref{E:b_map} always
holds.
Examples include the blow-down maps $\beta : [M; P] \to M$.
Of particular importance are the {\em b-fibrations}, which are fibrations in
the usual sense over the interiors and restrict again to b-fibrations over each
boundary face of the domain to some boundary face of the range.
They are defined to be b-maps whose natural differential (c.f. \cite{melrose1993atiyah}) is
surjective pointwise, and for which at most one exponent $e_{HH'}$ is nonzero
for each $H'$ in \eqref{E:b_map}; equivalently each boundary hypersurface of
$M$ is mapped surjectively either onto some boundary hypersurface of $N$ or
onto $N$ itself.

\begin{thm}
Every admissible map $f : X \to Y$ extends to a unique b-map $\wh f: \wh X \to \wh Y$. Moreover,
if $f$ is an admissible quotient, then $\wh f$ is a b-fibration.
\label{T:mb_functor}
\end{thm}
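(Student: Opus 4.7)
Uniqueness of $\wh f$ is immediate: the interior $X$ is open and dense in $\wh X$, and b-maps are continuous. For existence my plan is to factor $\wh f$ through an intermediate resolution of $\ol X$ adapted only to $f$. Set $\cR := \set{f^{-1}(W) : W \in \cS_Y}$; by admissibility $\cR \subset \cS_X$, and $\cR$ is closed under intersection since preimages commute with intersections and $\cS_Y$ is intersection-closed. With
\[
M_f := [\ol X;\ \set{\pa \ol R : R \in \cR}],
\]
part~\eqref{I:comm_blowup_order_factor} of Theorem~\ref{T:comm_blowup_order} produces a b-map $\wh X \to M_f$ factoring the blow-down $\wh X \to \ol X$, so it suffices to produce a b-map $\wt f : M_f \to \wh Y$ extending $f$ on the interior.

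\textbf{Inductive construction of $\wt f$.} Choose a size order $W_0 = \set 0 < W_1 < \cdots < W_N = Y$ on $\cS_Y$; the subfamily $\cR$ inherits a compatible size order via $f^{-1}$. I would argue by induction on $k$ that the partial iterated blow-up
\[
f_k : M_{f,k} := [\ol X;\ \pa\ol{f^{-1}(W_0)},\ldots,\pa\ol{f^{-1}(W_k)}] \longrightarrow \ol Y_k := [\ol Y;\ \pa\ol{W_0},\ldots,\pa\ol{W_k}]
\]
is a well-defined b-map extending $f$, and that the lift of $\pa \ol{f^{-1}(W_{k+1})}$ is a p-submanifold of $M_{f,k}$ which $f_k$ sends into the lift of $\pa \ol{W_{k+1}}$ in $\ol Y_k$. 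Granted these assertions, the standard lifting property for b-maps under blow-up of matched p-submanifolds produces the extension $f_{k+1}$, and after $N$ steps one obtains $\wt f := f_N$; composing with the factored blow-down $\wh X \to M_f$ of paragraph one yields $\wh f$. The base case---that a single blow-up of $\pa \ol{\ker f}$ resolves the linear singularity of $f$ at the sphere at infinity---is a classical local computation in the projective coordinates \eqref{E:proj_coord_notn}. The \emph{main obstacle} is the inductive step, which requires careful bookkeeping of iterated projective coordinates through the previous $k$ blow-ups to verify both the b-map property of $f_k$ near each new front face and the p-submanifold/fibration structure of the next preimage; this works precisely because $\cR$ mirrors $\cS_Y$ under $f^{-1}$, so inclusions and intersections in $\cR$ are exactly the preimages of the corresponding relations in $\cS_Y$.

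\textbf{The b-fibration property.} Suppose $f$ is an admissible quotient, i.e.\ $f(\cS_X) = \cS_Y$. The b-differential of $\wh f$ is surjective in the interior since $f$ is a linear surjection; this propagates to the boundary faces using the explicit local form of $\wh f$ produced in the inductive step, in which in suitable projective charts $\wh f$ appears as a linear submersion on radial-normal coordinates combined with a projection on spherical data. For the exponent condition I would inspect each boundary hypersurface of $\wh X$, which arises either as the original boundary $\pa \ol X$ or from the blow-up of $\pa \ol V$ for some $V \in \cS_X \setminus \set 0$, and distinguish two cases. If $V \not\subset \ker f$ then the hypersurface maps into the boundary hypersurface of $\wh Y$ indexed by $f(V) \in \cS_Y$, pulling back the corresponding defining function with exponent $1$ and all others trivially; if $V \subset \ker f$ then the hypersurface maps onto all of $\wh Y$ and contributes only zero exponents. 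The admissible-quotient hypothesis ensures $f(V) \in \cS_Y$ for every $V \in \cS_X$, so these cases exhaust all boundary hypersurfaces of $\wh X$ and yield the required b-map form \eqref{E:b_map} with at most one nontrivial exponent in each case.
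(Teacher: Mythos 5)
Your overall architecture overlaps substantially with the paper's: the reduction to the subsystem $f^{-1}(\cS_Y) \subset \cS_X$ via Theorem~\ref{T:comm_blowup_order}\eqref{I:comm_blowup_order_factor}, and the identification of the blow-up of $\pa\ol{\ker f}$ as the base case, are both present there. But there is a genuine gap where you defer the inductive step, because that step \emph{is} the theorem. The ``standard lifting property for b-maps under blow-up of matched p-submanifolds'' is not a black box here: to invoke it you must show that the lift of $\pa\ol{f^{-1}(W_{k+1})}$ in $M_{f,k}$ coincides with the $f_k$-preimage of the lift of $\pa\ol{W_{k+1}}$ and that $f_k$ is suitably transversal to the latter, and all of the content lives in that verification. (Note also that $W \mapsto f^{-1}(W)$ is in general neither injective nor order-reflecting when $f$ is not surjective, so ``$\cS_Y$ is mirrored under $f^{-1}$'' is only a heuristic.) The paper avoids a direct matching argument: it factors $f$ as an admissible quotient $X \to X/\ker f$ followed by an injection, treats the injection by showing $\ol X$ lifts to a p-submanifold of $\wh Y$ diffeomorphic to $\wh X$, and for the quotient inducts over intersection-closed subsets of the \emph{domain} system $\cS_X$, adding one minimal element $W$ at a time and analyzing three cases ($W \subset K$, $W + K = X$, and the remaining case, where the matched pair is $(\pa\ol{W+K},\ \pa\ol{W/(W\cap K)})$ rather than $(\pa\ol{W},\ \pa\ol{f(W)})$). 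Your single unified induction would have to reproduce both halves of that analysis inside the ``careful bookkeeping'' you postpone.

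Second, the b-fibration assertion does not follow from the factorization $\wh X \to M_f \to \wh Y$ even once $M_f \to \wh Y$ is shown to be a b-fibration, because the iterated blow-down $\wh X \to M_f$ is \emph{not} a b-fibration: the front face created by blowing up a center of codimension greater than one maps onto that center, which is neither a boundary hypersurface of $M_f$ nor $M_f$ itself. So your final hypersurface-by-hypersurface check carries the full weight of the second claim of Theorem~\ref{T:mb_functor}, and its two essential ingredients---that the hypersurface arising from $\pa\ol V$ maps \emph{onto} (not merely into) the hypersurface of $\wh Y$ indexed by $f(V)$, and that the b-differential remains surjective at each such face---are asserted rather than proved. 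These are exactly the statements the paper's three-case induction establishes, starting from the explicit product-radial coordinate computation of the base case. Until the inductive step and these surjectivity claims are written out, the proposal is a plausible plan rather than a proof.
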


\begin{proof}
We consider first the case that $f : X \to X/K$ is an admissible quotient, with
$\cS_X = \set{\set 0, K, X}$, so we must show that $\wh X = [\ol X; \pa \ol K]
\to \ol{X/K}$ is a b-fibration.
For this we choose a complement $X = W\oplus K$ and
write $x = (x_1,x_2) \in W\oplus K\cong\bbR^{n-k}\oplus\bbR^k$ in ``product radial'' coordinates
\[
\begin{gathered}
	x = (x_1,x_2) = R\omega =  R (r \xi_1,  s\xi_2),
	\\ R= \abs x,
	\quad \omega = \frac{x}{R}
	\quad r = \abs{ \frac{x_1}{R}}, \quad s = \abs{\frac{x_2}{R}}= \sqrt{1- r^2},
	\quad \xi_1 = \frac{x_1}{Rr}\in \bbS^{n-k-1}, \quad \xi_2 = \frac{x_2}{Rs} \in \bbS^{k-1}.
\end{gathered}
\]
As with standard polar coordinates if $R$ or $r$ or $s$ vanishes than the
spherical variables are under-determined; it is more accurate to view the
coordinates as a map
\[
	\bbR_+\times [0,1]\times \bbS^{n-k-1}\times \bbS^{k-1} = \set{(R,r,\xi_1,\xi_2)} \mapsto (Rr\xi_1,R(\sqrt{1-r^2})\xi_2) \in X
\]
which is a diffeomorphism away from the zero sets of $R$, $r$ or $s = \sqrt{1-r^2}$.
In any case, coordinates on the radial compactification of $X$ are given by
$(\rho, \omega) = (\rho,(r,\xi_1),(s,\xi_2))$, where $\rho = 1/R$, and coordinates on $\ol{X/K}$
are given by $(\sigma, \xi_1)$ where $\sigma = 1/Rr$.
The submanifold $\pa \ol K$ is given by $\set{\rho = r = 0}$, and its blow-up in $\ol X$
is parameterized near the corner by coordinates $(\sigma, (r, \xi_1), (\sqrt{1 - r^2}, \xi_2))$,
where again $\sigma = \rho/r = 1/Rr$, and $\set{r = 0}$ is no longer singular.
The projection map $X \to X/K$, $(x_1,x_2) \mapsto x_1$ extends by continuity
to the map $[\ol X; \pa \ol K] \to \ol{X/K}$, $(\sigma, r,\xi_1, \xi_2) \mapsto (\sigma, \xi_1)$,
which is manifestly a b-fibration.
%
%

Returning to the general case of an admissible quotient, let us assume
inductively that we have a b-fibration $[\ol X; \pa \cS'] \to [\ol{X/K}; \pa
(\cS'/K)]$ for an intersection closed subset $\cS' \subset \cS_X$ of the linear system in
$X$, the base case $\cS' = \set{\set 0, K, X}$ having been shown above.
Let $W \in \cS_X \setminus \cS'$ be a minimal element, meaning there is no
$V \in \cS_X \setminus \cS'$ with $V \subset W$. In particular $W \cap V \in \cS'$ for all $V \in \cS'$.
There are three possibilities:
\begin{enumerate}
[{\normalfont (1)}]
\item $W$ is contained in $K$. In this case the image of $W$ in $X/K$ is the trivial subspace, and does
not induce an additional blow-up in the target $[\ol{X/K}; \pa (\cS'/K)]$. In
the domain, the composition of the blow-down $[\ol X; \pa \cS', \pa \ol W] \to
[\ol X; \pa \cS']$ with the b-fibration to $[\ol{X/K}; \pa (\cS'/K)]$ is again
a b-fibration, since the front face associated to $\pa \ol W$ maps into the
interior of the target.
\item $W$ intersects $K$ transversally. In this case the image of $W$ in $X/K$ is the whole space, and again
does not induce an additional blow-up in the target. In the blow-up $[\ol X;
\pa \cS', \pa \ol W]$, the front face maps to the original radial boundary of
$[\ol{X/K}; \pa(\cS'/K)]$, which is of codimension one, so this is again a
b-fibration.
\item If neither of the above holds, then $W$ descends to the proper nontrivial subspace $W/(W\cap K)$ in $X/K$.
If we haven't yet blown up (the lift of) $\pa \ol{W/(W\cap K)}$ in the target,
then we may blow this up along with its preimage in the domain, which is (the lift
of) $\pa \ol {W + K}$; by admissibility $W + K$ is an element of $\cS_X$.  The
old b-fibration lifts to these blow-ups, and is again a b-fibration since the
new front face of the domain is mapped onto the new front face of the target
which has codimension one.
In so doing we may assume that $W + K \in \cS'$.  Then if $W \neq W + K$, the
composition of the blow-down $[\ol X; \pa \cS', \pa \ol W] \to [\ol X; \pa
\cS']$ with the b-fibration to $[\ol {X/K}; \pa (\cS'/K)]$ is a b-fibration
since the front face maps onto the hypersurface associated to $\pa\ol{W/(W\cap K)}$.
\end{enumerate}

In any case, by Theorem~\ref{T:comm_blowup_order}, the space $[\ol X; \pa
\cS', \pa \ol W]$ is diffeomorphic to a size order blow-up $[\ol X; \pa \cS'']$
where $\cS'' = \cS' \cup \set{W}$, and we may then replace $\cS'$ by $\cS''$
to complete the induction.
This completes the proof that admissible quotients lift to b-fibrations.

For a general admissible map $f : X \to Y$, we make a series of reductions.
By admissibility, $f^{-1}(S_Y)$ is an intersection closed linear subsystem of
$\cS_X$.
Then $[\ol X; \pa \cS_X]$ admits a b-map to $[\ol X; \pa f^{-1}(\cS_Y)]$ by
Theorem~\ref{T:comm_blowup_order}, so we can suppose from now on that $\cS_X
= f^{-1}(\cS_Y)$.
We may factor $f$ as the quotient $X \to X/K$ and an injection $X/K
\hookrightarrow Y$, where $K = \ker f$.
Since every element of $f^{-1}(\cS_Y)$ contains $K := \ker f$, the map $X \to
X/K$ is an admissible quotient, which extends to a b-map as shown above, so it
remains to consider the case that $X$ is a subspace of $Y$, with $\cS_X = X \cap \cS_Y$.
For each $W \in \cS_Y$ such that $W \cap X$ is a proper subspace of $X$, it follows
that the lift of $\ol X$ to the blow-up of $\pa \ol W$ in $\ol Y$ is diffeomorphic
to $[\ol X; \pa \ol {W \cap X}]$. Indeed, this is a general property of blow-up for cleanly
intersecting submanifolds which is elementary to check in local coordinates.
On the other hand, if $W \supset X$, then the lift of $\ol X$ to the blow-up of $\pa \ol W$
in $\ol Y$ is diffeomorphic again to $\ol X$.
It follows iteratively then that $\ol X \subset \ol Y$ lifts to a p-submanifold of $\wh Y$
which is diffeomorphic to $\wh X = [\ol X; \pa \cS_Y \cap X]$.
\end{proof}

\section{Boundary faces} \label{S:boundary}
A linear system $\cS_X$ is a set which is partially ordered by
inclusion, has minimal and maximal elements, and for each pair
$V$, $W$ of elements has a unique infimum
$V \cap W$.
It is notationally convenient at this point to use $\cS_X$ as an abstract partially ordered
indexing set, and from now on we will use Greek letters $\lambda,\mu \in \cS_X$ for
elements, with the order and infimum denoted by $\lambda \leq \mu$ and $\lambda
\wedge \mu$, respectively.
We denote the minimal element by $0$ and sometimes denote the maximal
element by $1$.
We write $X_\lambda$ instead of $\lambda$ when we wish to emphasize the actual subspaces of
$X$, thus $X_0 = \set 0 $, $X_1 = X$, and $X_{\lambda \wedge \mu} = X_\lambda
\cap X_\mu$.

\begin{thm}[c.f.\ \cite{vasy2001propagation}]
There is a bijective correspondence between
boundary hypersurfaces of $\wh X$
and
$\cS_X \setminus \set 0$,
under which $\lambda \in \cS_X \setminus \set 0$
corresponds to a hypersurface $N_\lambda$ diffeomorphic to the product
\begin{equation}
	N_\lambda \cong B_\lambda \times F_\lambda,
	\quad B_\lambda = [\pa \ol X_\lambda; \set{\pa \ol X_\mu : \mu < \lambda}],
	\quad F_\lambda = \wh{X/X_\lambda}.
	\label{E:bdy_hyp}
\end{equation}
Moreover $N_{\lambda_1} \cap \cdots \cap N_{\lambda_k} \neq \emptyset$ if and
only if $\set{\lambda_1 <\cdots <\lambda_k} \subset \cS_X$ is a totally ordered
subset.
In this case
\[
\begin{gathered}
	N_{\lambda_1}\cap \cdots \cap N_{\lambda_k} \cong B_{\lambda_1}\times B_{\lambda_1,\lambda_2} \times \cdots \times B_{\lambda_{k-1},\lambda_k} \times F_{\lambda_k},
	\\B_{\nu,\mu} = [\pa \ol{X_{\mu}/X_{\nu}}; \{\pa \ol {X_{\kappa}/(X_{\kappa} \cap X_{\nu})} : \kappa < \mu\}].
\end{gathered}
\]
\label{T:boundary_faces}
\end{thm}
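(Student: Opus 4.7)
My plan is to combine the commutativity results from Theorem~\ref{T:comm_blowup_order} with a local coordinate analysis based on a choice of complement $X = X_\lambda \oplus W$ using the product radial coordinates from the proof of Theorem~\ref{T:mb_functor}. The bijection between boundary hypersurfaces of $\wh X$ and $\cS_X \setminus \set 0$ is a direct count: each iterated blow-up of $\pa \ol X_\lambda$ for $\lambda \in \cS_X \setminus \set{0, X}$ produces exactly one new front face $N_\lambda$; the element $\lambda = X$ corresponds to the lift $N_X$ of the original boundary $\pa \ol X$; and $\lambda = 0$ contributes nothing since $\pa \ol{\set 0} = \emptyset$. Since each iterated blow-up occurs at a distinct locus, the resulting front faces are pairwise distinct.

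For the product decomposition, fix $\lambda$ and choose a complement $X = X_\lambda \oplus W$. In local coordinates $(\rho, \xi_1, x_2)$ near $\pa \ol X_\lambda$ in $\ol X$ (with $\rho$ a boundary defining function, $\xi_1$ spherical on $\pa \ol X_\lambda$, and $x_2 \in W$) the normal bundle of $\pa \ol X_\lambda$ trivializes as $\bbR_+ \oplus W$, and hence the spherical normal bundle is canonically diffeomorphic to $\ol W \cong \ol{X/X_\lambda}$. I would then analyze the contribution of each other $\pa \ol X_\mu$ according to its order relation with $\lambda$. First, for $\mu < \lambda$ the submanifold $\pa \ol X_\mu$ lies inside $\pa \ol X_\lambda$, so the prior blow-ups mandated by size order replace $\pa \ol X_\lambda$ by $B_\lambda$ while leaving the transverse $\ol W$ untouched, yielding the front face $B_\lambda \times \ol W$ upon blowing up $\pa \ol X_\lambda$. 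Second, for $\nu > \lambda$, writing $X_\nu = X_\lambda \oplus W_\nu$ with $W_\nu = X_\nu \cap W$, a local coordinate computation in projective coordinates shows that the lift of $\pa \ol X_\nu$ meets the front face in $B_\lambda \times \pa \ol W_\nu$, so the corresponding subsequent blow-up refines the $\ol W$ factor by blowing up $\pa \ol W_\nu$. Third, for $\mu$ incomparable with $\lambda$, the clean intersection argument from the proof of Theorem~\ref{T:comm_blowup_order} shows that the lifts of $\pa \ol X_\mu$ and $\pa \ol X_\lambda$ become disjoint after blowing up $\pa \ol X_{\mu \wedge \lambda}$ (which precedes both by size order), so $\pa \ol X_\mu$ does not meet $N_\lambda$. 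Collecting these contributions and identifying $W \cong X/X_\lambda$ yields $N_\lambda \cong B_\lambda \times F_\lambda$.

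For the intersection statement, if some pair $\lambda_i, \lambda_j$ among the indices is incomparable, then the third case above gives $N_{\lambda_i} \cap N_{\lambda_j} = \emptyset$. For a chain $\lambda_1 < \cdots < \lambda_k$, I would choose a nested system of complements $X = X_{\lambda_1} \oplus U_1 \oplus \cdots \oplus U_k$ with $X_{\lambda_i} = X_{\lambda_1} \oplus U_1 \oplus \cdots \oplus U_{i-1}$, and apply the decomposition iteratively: each step $i-1 \to i$ refines the complementary factor coming from the previous step into a further product, contributing an intermediate factor $B_{\lambda_{i-1}, \lambda_i} = [\pa \ol{X_{\lambda_i}/X_{\lambda_{i-1}}}; \set{\pa \ol{X_\kappa/(X_\kappa \cap X_{\lambda_{i-1}})} : \kappa < \lambda_i}]$ arising from the same three-case analysis applied within the $\wh{X/X_{\lambda_{i-1}}}$ factor, and terminating with $F_{\lambda_k} = \wh{X/X_{\lambda_k}}$.

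The main obstacle will be the local coordinate bookkeeping: verifying that at each stage the claimed product structure of the accumulated front face persists through subsequent blow-ups, that each $\pa \ol X_\nu$ with $\nu > \lambda_i$ contributes precisely the expected blow-up of $\pa \ol W_\nu$ (and nothing more) to the complementary factor, and that the inductive choice of compatible complements across all levels of a chain produces a genuinely consistent iterated product. Extra care is needed to show that in the incomparable case the lifts really remain disjoint through all subsequent blow-ups within intermediate factors of a long chain, and to match the resulting iterated blow-up of $\ol W$ with $\wh{X/X_\lambda}$ under the identification $W \cong X/X_\lambda$.
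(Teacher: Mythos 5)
Your proposal is correct and follows essentially the same route as the paper's proof: blow up the $\pa\ol X_\mu$ with $\mu<\lambda$ first to turn the lift of $\pa\ol X_\lambda$ into $B_\lambda$, identify the front face of its blow-up with the compactified normal bundle $B_\lambda\times\ol{X/X_\lambda}$ (trivialized by linearity / a choice of complement), note that subsequent blow-ups for $\nu>\lambda$ only refine the second factor while incomparable elements are separated by the prior blow-up of $\pa\ol X_{\mu\wedge\lambda}$, and handle chains by iterating this decomposition inductively. The only difference is presentational: you carry out the normal-bundle identification in explicit product-radial coordinates, where the paper invokes it more abstractly.
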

In particular there is a maximal $\cS_X$ boundary hypersurface $N_1 = B_1$ with
respect to $(\cS_X,\leq)$ which in the many body literature is sometimes called
the {\em free region}.
$F_\lambda = \wh{X/X_\lambda}$ is again a many-body compactification with
system $\cS_{F_\lambda} \cong \set{\mu \in \cS_X : \mu \leq \lambda}$ and
maximal element $1 = \lambda$.
The other factor $B_\lambda$, which may be identified with the free region of
$\wh X_\lambda$, is not a many-body compactification, but it is a manifold with
corners whose boundary hypersurfaces are indexed by the ordered set $\set{ \mu
\in \cS_X : \lambda > \mu}$.

It is convenient to set $N_0 = \wh X$, which is not of course a boundary
hypersurface, but is consistent with \eqref{E:bdy_hyp}, since $B_0$ is a point
(being the radial compactification of $\set{0}$), and $F_0 = \wh {X/\set 0} =
\wh X$.

\begin{proof}[Proof of Theorem~\ref{T:boundary_faces}]
The definition of $\wh X$ makes it clear that there is indeed a boundary
hypersurface $N_\lambda$ for each element $\lambda$ of $\cS_X \setminus \set
0$.
To see the structure of $N_\lambda$, consider its origin as the submanifold
$\pa \ol X_\lambda$ inside $\ol X$.
We first blow-up in $\ol X$ all those submanifolds $\pa \ol X_\mu$ such that
$\mu < \lambda$ (note that if $\mu$ precedes $\lambda$ in the size order but
not in the original partial order on $\cS_X$ then $\pa \ol X_\mu$ and $\pa \ol
X_\lambda$ do not meet), after which the lift of $\pa \ol {X_\lambda}$ is
diffeomorphic to the space $B_\lambda = [\pa \ol X_\lambda; \set{\pa \ol X_\mu
: \mu < \lambda}]$.

We then blow-up this lift $B_\lambda$ of $\pa \ol X_\lambda$ itself,
introducing as a front face the inward pointing spherical normal bundle of
$B_\lambda$ in $\ol X$.
However, the inward spherical normal bundle of $\pa \ol X_\lambda$ in $\ol X$
is equivalent (essentially by definition) to the radial compactification of the
normal bundle to $\ol X_\lambda$ in $\ol{X}$ and this remains true even after
passing to $B_\lambda$ by blow-up.
Since the spaces are linear, this bundle is simply the product $B_\lambda
\times \ol{X/X_\lambda}$.

Finally, we proceed to blow-up the lifts of those $\pa \ol X_\mu$ such that
$\mu > \lambda$, which meet $B_\lambda \times
\ol{X/X_\lambda}$ in the submanifolds $B_\lambda \times \pa
\ol{X/X_\mu}$ sitting inside the boundary face $B_\lambda \times \pa
\ol{X/X_\lambda}$, from which the first claim follows.

The second claim follows from the fact that for any pair of subspaces
$X_\lambda$ and $X_\mu$ such that $X_\lambda \not \subset X_\mu$ and $X_\mu
\not \subset X_\lambda$, the lifts of $\pa \ol X_\lambda$ and $\pa \ol X_\mu$
are made disjoint by the blow-up of $\pa \ol{X_{\lambda \wedge \mu}}$.
If, on the other hand, $\lambda < \mu$, then $N_\mu$ and $N_\lambda$ meet
precisely in the boundary hypersurface of $N_\lambda = B_\lambda \times
F_\lambda$ introduced by the blow up of $B_\lambda \times \pa
\ol {X_\mu/X_\lambda}$ inside $B_\lambda \times \pa \ol{X/X_\lambda}$, which
corresponds in the second factor to the face $N_{\lambda,\mu} = B_{\lambda,\mu}
\times F_\mu$ of $F_\lambda$, giving $N_\mu \cap N_\lambda = B_\lambda \times
B_{\lambda,\mu}\times F_\mu$.
The general case follows by induction.
\end{proof}

\begin{rmk}\label{Rmk:qfb}
The structure \eqref{E:bdy_hyp} is a primary example of what is known variously
in the manifolds with corners literature as
a {\em resolution structure} \cite{albin2010resolution},
an {\em iterated fibration structure} \cite{albin2012signature},
or (the term we use here)
a {\em fibered corners structure} \cite{conlon2016quasi}.
In general, this means a manifold with corners $M$ whose boundary hypersurfaces
$N_\lambda$ are indexed by a partially ordered set
and are equipped with fibrations $\phi_\lambda : N_\lambda \to B_\lambda$ with
typical fiber $F_\lambda$, such that:
\begin{enumerate}
[{\normalfont (i)}]
\item Each $F_\lambda$ and $B_\lambda$ are also manifolds with corners.
\item $N_{\lambda_1} \cap \cdots \cap N_{\lambda_N} \neq \emptyset$ if and only if $\lambda_1 < \cdots < \lambda_N$
is a totally ordered chain.
\item If $\lambda < \mu$, then $\phi_{\lambda} \rst_{N_\lambda \cap N_\mu} : N_\lambda \cap N_\mu \to B_\lambda$
is a fibration whose typical fiber is a boundary hypersurface $\pa_\mu F_\lambda$ of
$F_\lambda$, while $\phi_\mu \rst_{N_\lambda \cap N_\mu}$ is a restriction of $\phi_\mu$
over a boundary hypersurface $\pa_\lambda B_\mu := \phi_\mu (N_\lambda \cap N_\mu)$ of
$B_\mu$; moreover there is a fibration $\phi_{\lambda,\mu} : \pa_\lambda B_\mu \to B_\lambda$
such that $\phi_{\lambda,\mu} \circ \phi_\mu = \phi_\lambda$.
\item Every boundary hypersurface of $F_\lambda$ is of the form $\pa_\mu F_\lambda$ for some $\mu > \lambda$
and likewise every boundary hypersurface of $B_\mu$ is of the form $\pa_\lambda B_\mu$
for some $\lambda < \mu$.
In particular it follows that each $F_\lambda$ and $B_\lambda$ has a fibered
corners structure induced by the maps $\phi_\lambda$ and $\phi_{\mu,\lambda}$,
respectively.
\end{enumerate}
(The fibered corners structure on a many body compactification $\wh X$ is
suggested by \eqref{E:bdy_hyp}, with $F_\lambda$ the fiber and
$B_\lambda$ the base, which is the correct interpretation from the point of
view of QFB metrics (see below).
However, since the fibrations are products in this case, $\wh X$ also admits an
inequivalent fibered corners structure with the indexing set, as well as the
roles of the $F_\lambda$ and $B_\lambda$, reversed!)

In fact, by \cite{albin2012signature}, a manifold $M$ with fibered corners is
equivalent to the resolution of a {\em smoothly stratified space} $\wt M =
M/{\sim}$, obtained by collapsing the fibers of each boundary hypersurface, i.e.,
taking the quotient by the equivalence relation where $p \sim q$ if
$\phi_\lambda(p) = \phi_\lambda(q)$ for some $\lambda$.
Conversely, a smoothly stratified space may be defined intrinsically as a
stratified space $S$ with control data in the sense of Mather---in particular,
the strata admit tubular neighborhoods in $S$ which are assumed to be locally
trivial cone bundles (see \cite{albin2012signature} for a detailed
definition)---and then the resolution by iterative radial blowup of the strata
of $S$ yields a manifold with fibered corners.

In our case, the smoothly stratified space in question is simply the original radial
compactification $\ol X$, with strata consisting of the interior of $\ol X$
along with the boundaries $\pa \ol X_{\lambda} \setminus \set{\pa \ol X_\mu :
\mu < \lambda}$ of the subspaces in the system $\cS_X$.

In addition to this combinatorial topological structure, there is a natural
geometric structure on $\wh X$ induced by any Euclidean metric on $X$.
Such structure may characterized equivalently in terms of the vector fields
which are bounded with respect to such a metric, and in turn these vector
fields admit a metric-independent description.
In the general setting of a manifold $M$ with fibered corners, a {\em
quasi-fibered boundary} (QFB) structure (see \cite{conlon2016quasi}) is defined by a Lie subalgebra
$\cV_\QFB(M) \subset \cV_\b(M)$ of vector fields (here $\cV_\b(M)$ is the
algebra of vector fields tangent to all boundary faces of $M$), defined as
those vector fields $V$ such that
\begin{enumerate}
[{\normalfont (i)}]
\item
$V$ is tangent to the fibers $F_\lambda$ at
each boundary hypersurface $N_\lambda$, and
\item
$V(\rho) \in
\rho^2C^\infty(M)$ where $\rho = \prod_\lambda \rho_\lambda$ is a choice of
total boundary defining function.
\end{enumerate}
A {\em QFB metric} may be then be defined as a Riemannian metric on the
interior such that the pointwise norm of each $V \in \cV_\QFB(M)$ extends
smoothly up to the boundary of $M$.
In the special case that the boundary fibrations are trivial for maximal $\lambda$
(so $N_\lambda \cong B_\lambda$ with $\phi_\lambda \cong \id$), a QFB structure is
known as a {\em quasi-asymptotically conic} (QAC) structure (c.f. \cite{degeratu2014fredholm,conlon2016quasi}).

That the lift of a Euclidean metric on $X$ furnishes a QAC metric on $\wh X$ is
trivial to verify: indeed, on the radial compactification $\ol X$, the inverse
radial function $\rho = r^\inv$ furnishes a canonical boundary defining
function, which then lifts to a total boundary defining function on $\wh X$.
Moreover, the vector fields which are bounded with respect to the Euclidean
metric are precisely those $V$ on $\ol X$ such that $V\rho \in \rho^2C^\infty(\ol
X)$ (these are the {\em scattering vector fields} in the sense of Melrose \cite{melrose1994spectral}),
and these are easily seen to lift to be tangent to the boundary fibrations on
$\wh X$.
\end{rmk}

From now on we consider the b-fibration $\wh f : \wh X \to \wh Y$ associated to
an admissible quotient $f : X \to Y$.
As $\wh f$ is a b-fibration, the smallest face of $\wh Y$ containing the image
$f(N_\lambda)$ of each hypersurface $N_\lambda$ is either a hypersurface
$M_\mu$ of $\wh Y$ or $\wh Y$ itself.
We recall that the restriction of a b-fibration to an arbitrary boundary face
of the domain is again a b-fibration.

Fix $\lambda \in \cS_X$ and let $\mu = f(\lambda) \in \cS_Y$.
Note then that
\[
	f_\lambda := f \rst_{X_\lambda} : X_\lambda \to Y_\mu,
	\quad \text{and}\quad
	f/f_{\lambda} : X/X_\lambda \to Y/Y_\mu
\]
are admissible linear maps, the former of which sends the
maximal element of $\cS_{X_\lambda}$ to the maximal element in $\cS_{Y_\mu}$ and the
latter of which is an admissible quotient.

\begin{thm}
Let $f : X \to Y$ be an admissible quotient,
let $N_\lambda$ be a boundary hypersurface of $\wh X$, and set $\mu =
f(\lambda) \in \cS_Y$.
Then
the restriction of $\wh f$
to $N_\lambda$ is a b-fibration onto $M_\mu$ which is diffeomorphic to the product map
\[
	\wh f_\lambda \times \wh {f/f_\lambda} :
	B_\lambda \times F_\lambda 
	\to B_\mu \times F_\mu 
\]
where $B_\lambda \subset \wh X_\lambda$ and $B_\mu \subset \wh Y_\mu$ are the
free regions of $\wh {X_\lambda}$ and $\wh {Y_\mu}$, respectively, and $F_\lambda = \wh{X/X_\lambda}$
and $F_\mu = \wh{Y/Y_\mu}$ as above.
\label{T:mb_map_on_hs}
\end{thm}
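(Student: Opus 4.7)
The plan is to combine the product description $N_\lambda \cong B_\lambda \times F_\lambda$ of Theorem~\ref{T:boundary_faces}, the b-fibration output of Theorem~\ref{T:mb_functor} applied to the linear maps $f_\lambda$ and $f/f_\lambda$, and a linear splitting of $f$ compatible with the pair $(X_\lambda, Y_\mu)$, so that the restriction of $\wh f$ to $N_\lambda$ visibly factors as a product.

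First I would verify that $f_\lambda : X_\lambda \to Y_\mu$ and $f/f_\lambda : X/X_\lambda \to Y/Y_\mu$ are admissible quotients with respect to the inherited linear systems $\cS_{X_\lambda} := \set{V \in \cS_X : V \subseteq X_\lambda}$ and $\cS_X/X_\lambda$ (and the analogous ones for $Y$). For $f_\lambda$, given $\pi \in \cS_{Y_\mu}$, admissibility of $f$ gives $f^\inv(\pi) \in \cS_X$, so $f_\lambda^\inv(\pi) = f^\inv(\pi) \cap X_\lambda \in \cS_{X_\lambda}$; and because $f(X_\lambda) = Y_\mu$ together with $f(\cS_X) = \cS_Y$ (Lemma~\ref{L:ad_quot}) we obtain $f_\lambda(\cS_{X_\lambda}) = \cS_{Y_\mu}$. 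The same reasoning handles $f/f_\lambda$. Theorem~\ref{T:mb_functor} then produces b-fibrations $\wh f_\lambda : \wh{X_\lambda} \to \wh{Y_\mu}$ and $\wh{f/f_\lambda} : \wh{X/X_\lambda} \to \wh{Y/Y_\mu}$. Since $B_\lambda$ is the unique maximal boundary hypersurface (free region) of $\wh{X_\lambda}$ and is mapped onto the free region $B_\mu$ of $\wh{Y_\mu}$, restriction yields the b-fibration $\pa \wh f_\lambda : B_\lambda \to B_\mu$.

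Next I would exhibit the product form by a linear splitting. Choose complements $X = X_\lambda \oplus W$ and $Y = Y_\mu \oplus V$ with $f(W) \subseteq V$, so that $f$ decomposes in block form as $f_\lambda \oplus (f/f_\lambda)$ under the identifications $W \cong X/X_\lambda$ and $V \cong Y/Y_\mu$. In the product radial coordinates employed in the base case of Theorem~\ref{T:mb_functor}, the blow-up of $\pa \ol X_\lambda$ in $\ol X$ trivializes the inward spherical normal bundle as $\pa \ol X_\lambda \times \ol{X/X_\lambda}$, the extension of $f$ acts diagonally in the two factors, and the analogous trivialization holds on the $Y$ side. This is where the identification $N_\lambda \cong B_\lambda \times F_\lambda$ of Theorem~\ref{T:boundary_faces} originates, and the corresponding identification $M_\mu \cong B_\mu \times F_\mu$ holds on the target.

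The main work, and the chief obstacle, is verifying that the remaining iterated blow-ups---those producing $\wh X$ from $[\ol X; \pa \ol X_\lambda]$ and $\wh Y$ from $[\ol Y; \pa \ol Y_\mu]$---preserve the product splitting of $N_\lambda$ and remain compatible with $f$. I would handle this by an induction on $\cS_X$ modeled on the proof of Theorem~\ref{T:mb_functor}, adjoining a minimal new subspace $X_\nu$ at each step and distinguishing cases: if $X_\nu \subseteq X_\lambda$, the blow-up refines the $B_\lambda$ factor of $N_\lambda$ and corresponds on $Y$ to blowing up $\pa \ol{f(X_\nu)} \subseteq \pa \ol{Y_\mu}$ in $B_\mu$; if $X_\nu \not\subseteq X_\lambda$, it contributes $X_\nu/(X_\nu \cap X_\lambda)$ to the linear system of $F_\lambda$ and corresponds to $f(X_\nu)/(f(X_\nu) \cap Y_\mu)$ in $F_\mu$ (the trivial or ``reducing'' subcases from the proof of Theorem~\ref{T:mb_functor} correspond to $X_\nu + X_\lambda$ being accounted for already). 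In each case, the commutation of blow-ups established in Theorem~\ref{T:mb_functor} shows the lifted map is still a b-fibration, while the argument of Theorem~\ref{T:boundary_faces} shows the product factorization of $N_\lambda$ survives and maps componentwise to that of $M_\mu$. Patching across the induction identifies $\wh f \rst_{N_\lambda}$ with $\pa \wh f_\lambda \times \wh{f/f_\lambda}$; the delicate point throughout is that the product structure must be preserved at every stage, not merely be available as an abstract diffeomorphism at the end.
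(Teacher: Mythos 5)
Your strategy---an induction over $\cS_X$ that tracks the product decomposition of $N_\lambda$ and its compatibility with $f$ through every single blow-up---is genuinely different from, and much heavier than, the paper's argument. The paper's proof is essentially one observation: $\wh f\rst_{N_\lambda}$ and $\wh f_\lambda\times\wh{f/f_\lambda}$ are both continuous maps on $N_\lambda\cong B_\lambda\times F_\lambda$, so by uniqueness of continuous extensions it suffices to check that they agree on the dense interior of $N_\lambda$. That interior is identified with the normal bundle $\pa\ol X_\lambda\times X/X_\lambda$, on which $\wh f$ is forced by continuity from the interior of $\wh X$ to equal the normal differential of the linear map $f$, which by linearity is exactly $f_\lambda\times f/f_\lambda$ under the splittings $X\cong X_\lambda\times X/X_\lambda$ and $Y\cong Y_\mu\times Y/Y_\mu$; the b-fibration claim comes from the general fact, recalled just before the theorem, that a b-fibration restricts to a b-fibration on any boundary face. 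This soft argument makes your entire third paragraph unnecessary.

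That matters because the third paragraph is where your proposal has a real gap: you correctly flag that ``the product structure must be preserved at every stage'' is the chief obstacle, but you do not prove it---you defer to Theorems~\ref{T:mb_functor} and~\ref{T:boundary_faces}, neither of which asserts that the diffeomorphisms they produce intertwine the boundary product decompositions with $\wh f$ stage by stage. As written this is a plan, not a proof, and executing it would require re-running the local coordinate analysis of both theorems simultaneously on domain and target. Two smaller points. First, your claim that $f_\lambda(\cS_{X_\lambda})=\cS_{Y_\mu}$ does not follow merely from $f(\cS_X)=\cS_Y$ and $f(X_\lambda)=Y_\mu$: for $W=f(V)\in\cS_{Y_\mu}$ the obvious candidate $V\cap X_\lambda$ may map onto a proper subspace of $W$. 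The statement is in fact true, but one should take $f^\inv(W)\cap X_\lambda$ and invoke the modular law $(f^\inv(W)\cap X_\lambda)+\ker f=f^\inv(W)\cap(X_\lambda+\ker f)$; note the paper deliberately uses only the weaker fact that $f_\lambda$ is admissible and sends the maximal element of $\cS_{X_\lambda}$ to that of $\cS_{Y_\mu}$. Second, your choice of compatible complements $X=X_\lambda\oplus W$, $Y=Y_\mu\oplus V$ with $f(W)\subseteq V$ does exist and is a correct way to realize the block decomposition, consistent with the identification used in Theorem~\ref{T:boundary_faces}.
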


Note that the statement applies as well in the case that $f(\lambda) = 0$
(i.e., $X_\lambda \in \ker f$), in which case $N_\lambda$ maps onto $M_0 = \wh Y$
itself, via the map
\[
	0 \times \wh {f/f_\lambda} : B_\lambda \times \wh{X/X_\lambda} \to \set 0 \times \wh Y.
\]

\begin{proof}[Proof of Theorem~\ref{T:mb_map_on_hs}]
By uniqueness of the continuous extensions of $f$, $f_\lambda$ and
$f/f_\lambda$ to the compactifications of their respective domains, it suffices
to show that $\wh f$ and $\wh f_\lambda \times \wh{f/f_\lambda}$ agree on the
interior of $N_\lambda$.
As noted in the proof of Theorem~\ref{T:boundary_faces} above, this
interior may be identified with the normal bundle of (an open dense subset of)
$\pa \ol {X_\lambda}$, which is just the product $\pa \ol {X_\lambda} \times X/X_\lambda$.
On the other hand, as a linear map $f$ may be identified with its normal differential
along $X_\lambda$, which may be in turn identified with the product map
$f_\lambda \times f/f_\lambda$ from $X \cong X_\lambda \times X/X_\lambda$ to $Y \cong Y_\mu \times Y/Y_\mu$.
The extension of this by continuity over $\pa \ol X_{\lambda} \times X/X_\lambda$ agrees
by definition with $\wh f \times f/f_\lambda$, and therefore with $\wh f \times \wh{f/f_\lambda}$
on the interior of its domain.
\end{proof}

\section{An application} \label{S:appl_scatprod}
In \cite{melrose2008scattering}, Melrose and Singer consider the problem of compactifying
the products $X^n$ of a vector space $X$ as manifolds with corners $X^n_{\mathrm{sc}}$ in such a way that
\begin{enumerate}
[{\normalfont (i)}]
\item
$X^1_{\mathrm{sc}} = \ol X$, the radial compactification,
\item
the action of the permutation group $\Sigma_n$ lifts to $X^n_{\mathrm{sc}}$,
\item
the various projections
\begin{equation}
	\pi_{I} : X^n \ni (u_1,\ldots,u_n) \mapsto (u_{i_1},u_{i_2},\ldots,u_{i_k}) \in X^k,
	\quad I = \set{1 \leq i_1< i_2 < \cdots <i_k \leq n}
	\label{E:projections}
\end{equation}
lift to b-fibrations $X^n_{\mathrm{sc}} \to X^k_{\mathrm{sc}}$, and
\item \label{I:difference_maps}
the difference maps
\begin{equation}
	\delta_{ij} : X^n \ni (u_1,\ldots,u_n) \mapsto u_i - u_j \in X,
	\quad i \neq j
	\label{E:difference_maps}
\end{equation}
lift to b-fibrations $ X^n_{\mathrm{sc}} \to \ol X$.
\end{enumerate}
In fact they work in the setting of a general compact manifold with boundary
$M$ in place of $\ol X$, so generalizing to higher $n$ the {\em scattering spaces}
$M^2_{\mathrm{sc}}$ and $M^3_{\mathrm{sc}}$ introduced in
\cite{melrose1994spectral} to support kernels of pseudodifferential operators
and their compositions.
(Note that \eqref{I:difference_maps} does not make sense at this level of generality.)
In order to work in this general setting, Melrose and Singer must
start with the manifolds with corners $M^n = (\ol X)^n$ and develop quite a few
delicate and technical results about commutativity of blow-up of various
families of submanifolds in order to obtain spaces
satisfying the required properties.

On the other hand, provided one is willing to stick to the original setting of
vector spaces, the comparatively simpler theory developed here
furnishes an immediate solution.
Indeed, within the product $X^n$ consider two families of subspaces: the {\em
axes} $\set{(u_1,\ldots,u_n) \in X^n : u_i = 0 \text{ for $i \in J$}}$ and the
{\em diagonals} $\set{(u_1,\ldots,u_n) : u_i = u_j \text{ for $i,j \in J$}}$,
where here $J$ runs over all subsets of $\set{1,\ldots,n}$.
The following is immediate.
\begin{thm}
Let $X$ be a vector space, and for $n \in \bbN$, equip $X^n$ with the linear
system generated by all axes and diagonals. Then the permutations $\Sigma_n \ni
\sigma : X^n \to X^n$, the projections \eqref{E:projections}, and the
difference maps \eqref{E:difference_maps} are all admissible quotients, hence
lift to respective b-fibrations
\[
	\wh \sigma : \wh{X^n} \to \wh{X^n},
	\quad \wh \pi_I : \wh{X^n} \to \wh{X^k},
	\quad \text{and} \quad \wh \delta_{ij} : \wh{X^n} \to \ol X.
\]
\label{T:mb_soln}
\end{thm}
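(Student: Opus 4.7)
The plan is to verify that each of the three classes of maps is an admissible quotient in the sense of Lemma~\ref{L:ad_quot}, after which the b-fibrations are immediate from Theorem~\ref{T:mb_functor}. My first step will be to make the linear system $\cS_{X^n}$ explicit. Augmenting the index set by a formal symbol $0$ representing the zero vector, I will associate to each equivalence relation $R$ on $\{0, 1, \ldots, n\}$ the subspace
\[
	V_R := \bigl\{ u \in X^n : u_i = u_j \text{ whenever } i \sim_R j, \text{ with the convention } u_0 := 0 \bigr\}.
\]
Axes correspond to those $R$ with at most one nontrivial class and that class containing $0$, while diagonals correspond to those with at most one nontrivial class not containing $0$. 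Since $V_{R_1} \cap V_{R_2} = V_R$ for $R$ the join of $R_1$ and $R_2$ (transitive closure of their union), the generated system $\cS_{X^n}$ is precisely the collection of all $V_R$ as $R$ ranges over equivalence relations on $\{0, 1, \ldots, n\}$.

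With this description, each of the three cases can be dispatched as follows. A permutation $\sigma \in \Sigma_n$, extended to fix $0$, carries $V_R$ to $V_{\sigma \cdot R}$ and so bijects $\cS_{X^n}$ with itself; this is trivially an admissible quotient. For a projection $\pi_I : X^n \to X^k$, the image $\pi_I(V_R)$ coincides with $V_{R\rst_{\{0\} \cup I}}$, the subspace of $X^k$ associated to the restriction of $R$ to $\{0\} \cup I$; conversely, the preimage $\pi_I^\inv(V_{R'})$ of the subspace corresponding to an equivalence relation $R'$ on $\{0\} \cup I$ is $V_R$, where $R$ extends $R'$ by declaring each index outside $I$ to be a singleton. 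This gives both $\pi_I^\inv(\cS_{X^k}) \subset \cS_{X^n}$ and $\pi_I(\cS_{X^n}) = \cS_{X^k}$. Finally, for a difference map $\delta_{ij} : X^n \to X$, with $X$ bearing its trivial system $\{0, X\}$, the preimage $\delta_{ij}^\inv(0)$ equals $V_R$ for $R$ identifying only $i$ and $j$, which is the diagonal $D_{\{i,j\}}$ and lies in $\cS_{X^n}$; surjectivity is immediate, so $\delta_{ij}(\cS_{X^n}) = \cS_X$.

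Invoking Theorem~\ref{T:mb_functor} on each of these admissible quotients then delivers the three asserted b-fibrations. The only substantive task in the plan is the combinatorial identification of $\cS_{X^n}$ with the lattice of equivalence relations on $\{0, 1, \ldots, n\}$, which is essentially a bookkeeping matter; once it is in hand the three cases are tautological, so I do not expect any real obstacle beyond organizing the notation cleanly.
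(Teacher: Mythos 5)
The paper gives no argument beyond declaring the theorem immediate, and your proposal supplies exactly the intended direct verification: identify the system generated by axes and diagonals with the subspaces $V_R$ indexed by equivalence relations on $\{0,1,\ldots,n\}$ (with $u_0:=0$), check that each map satisfies the admissible-quotient criterion of Lemma~\ref{L:ad_quot}, and invoke Theorem~\ref{T:mb_functor}. The only step stated too quickly is the claim that $\delta_{ij}(\cS_{X^n})=\cS_X$ follows from surjectivity alone: since $\cS_X=\{\{0\},X\}$ one must also rule out $\delta_{ij}(V_R)$ being a proper nontrivial subspace of $X$, which holds because $\delta_{ij}(V_R)=\{0\}$ when $i\sim_R j$ and $\delta_{ij}(V_R)=X$ otherwise (at most one of $i,j$ can be $R$-equivalent to $0$, so the class of the other can be assigned an arbitrary value); this is a one-line addition, not a genuine gap.
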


\begin{rmk}
The main reason why the ``many-body space'' solution to the above compactification
problem is so much simpler than the ``scattering products'' solution is the availability
in the linear setting of the radial compactification $\ol{X^n}$ of the product
as an alternative to the product $(\ol X)^n$ of the radial compactifications.
If $M$ is a manifold with boundary, it is possible to define the analogue of
the radial compactification of the products $(M^\circ)^n$, though unless $\pa
M$ is a sphere these will be singular stratified spaces, and it is far from
clear that an analogue of Theorem~\ref{T:comm_blowup_order} holds in such a category.

Finally, we note here that the spaces $\wh{X^n}$ and $X^n_{\mathrm{sc}}$ are {\em not}
diffeomorphic if $n \geq 3$, the verification of which we leave as an exercise
to the interested reader.
\end{rmk}

\bibliographystyle{amsalpha}
\bibliography{references}
\end{document}